\theoremstyle{plain}
\newtheorem{thm}{Theorem}[section] 
\newtheorem{cor}[thm]{Corollary}
\newtheorem{prop}[thm]{Proposition}
\newtheorem{conj}[thm]{Conjecture}
\newtheorem{lem}[thm]{Lemma}
\theoremstyle{definition} 
\newtheorem{defn}[thm]{Definition}
\newtheorem{eg}[thm]{Example} 
\theoremstyle{remark}
\newtheorem{rem}[thm]{Remark}
\newtheorem{ques}[thm]{Question}
\newtheorem*{cl}{Claim}
\newtheorem*{acknowledgement}{Acknowledgments}
\def\sO{{\mathcal{O}}}
\def\J{{\mathcal{J}}} 
\def\Z{{\mathbb{Z}}}
\def\N{{\mathbb{N}}} 
\def\Q{{\mathbb{Q}}} 
\def\R{{\mathbb{R}}} 
\def\F{{\mathbb{F}}}
\def\a{{\mathfrak{a}}}
\def\m{{\mathfrak{m}}}
\def\Spec{\mathop{\mathrm{Spec}}\nolimits}
\def\max{\mathop{\mathrm{max}}\nolimits}
\def\Ker{\mathop{\mathrm{Ker}}\nolimits}
\def\deg{\mathop{\mathrm{deg}}\nolimits}
\def\lct{\mathop{\mathrm{lct}}\nolimits}
\def\fpt{\mathop{\mathrm{fpt}}\nolimits}
\def\sup{\mathop{\mathrm{sup}}\nolimits}
\def\hsymbl#1{\smash{\hbox{\LARGE$#1$}}}
\def\hsymbu#1{\smash{\lower1.4ex\hbox{\LARGE$#1$}}}
\title{Log canonical thresholds of binomial ideals}
\author{Takafumi Shibuta}
\address{Department of Mathematics, Rikkyo University/JST CREST, Nishi-Ikebukuro, Tokyo 171-8501, Japan}
\email{shibuta@rikkyo.ac.jp}
\author{Shunsuke Takagi}
\address{Department of Mathematics, Kyushu University, 6-10-1, Hakozaki, Higashi-ku, Fukuoka 812-8581, Japan}
\email{stakagi@math.kyushu-u.ac.jp}
\subjclass[2000]{13A35, 14B05, 90C05}
\dedicatory{Dedicated to Professor~Toshiyuki~Katsura on the~occasion of his~sixtieth~birthday.}
\begin{document}

\begin{abstract}
We prove that the log canonical thresholds of a large class of binomial ideals, such as complete intersection binomial ideals and the defining ideals of space monomial curves, are computable by linear programming.   
\end{abstract}

\maketitle

\section*{Introduction}
The log canonical threshold is an invariant of singularities which plays an important role in higher-dimensional algebraic geometry. 
Let $\a \subseteq (x_1, \dots, x_n)$ be an ideal of the polynomial ring $k[x_1, \dots, x_n]$ over a field $k$ of characteristic zero. 
Since the log canonical threshold $\lct_0(\a)$ of $\a$ at the origin is defined via a log resolution of $\a$, it is very difficult to compute it directly from the definition, and an effective method for computing log canonical thresholds is not known. 
A notable exception is the case of monomial ideals. 
Howald \cite{Ho1, Ho2} proved that $\lct_0(\a)$ is computable by linear programming when $\a$ is a monomial ideal or a principal ideal generated by a non-degenerate polynomial. 
In this paper, we initiate the study of log canonical thresholds of binomial ideals. We then prove that the log canonical thresholds of a large class of binomial ideals, such as complete intersection binomial ideals  and the defining ideals of space monomial curves, are still computable by linear programming. Our main result is stated as follows: 

\begin{thm}[\textup{Theorems \ref{CImainresult} and \ref{nonCImonomialcurve}}]\label{main theorem}
Let $k$ be a field of characteristic zero and $\a=(f_1, \dots, f_r) \subseteq (x_1, \dots, x_n)$ be an ideal of $k[x_1, \dots, x_n]$ generated by binomials $f_i=x_1^{a_{i1}} \cdots x_n^{a_{in}}-\gamma_i x_1^{b_{i1}} \cdots x_n^{b_{in}}$,  
where 
$a_{ij}, b_{ij} \in \Z_{\ge 0}$ and $\gamma_i \in k$ for all $i=1, \dots, r$ and $j=1, \dots, n$.  
Suppose that $\a$ contains no monomials and, in addition, that one of the following conditions is satisfied:
\begin{enumerate}
\item $f_1, \dots, f_r$ form a regular sequence for $k[x_1, \dots, x_n]$,
\item $f_1, \dots, f_r$ form the canonical system of generators of the defining ideal of a monomial curve in $\mathbb{A}^3_k$ $($in this case, $r \le 3)$. 
\end{enumerate}
Then the log canonical threshold $\lct_0(\a)$ of $\a$ at the origin is equal to 
\begin{multline*}
\max\biggl\{\sum_{i=1}^r (\mu_i+\nu_i) \bigg| \\
\sum_{i=1}^r (a_{ij} \mu_i+b_{ij} \nu_i) \leq 1 \textup{ for all $1 \leq j \leq n$},  \; \mu_i+\nu_i \le 1,  \ \mu_i, \nu_i, \in \Q_{\ge 0} \biggr\}.
\end{multline*}
\end{thm}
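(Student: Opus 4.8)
The hypotheses (1) and (2) call for two rather different arguments --- they are the cited Theorems~\ref{CImainresult} and~\ref{nonCImonomialcurve} --- but in both cases the proof splits into the inequalities ``$\le$'' and ``$\ge$'' and starts from common reductions. Write $\tau$ for the optimum of the linear program on the right-hand side; since $\mu_i+\nu_i\le 1$ we have $0<\tau\le r$, and since $\a$ contains no monomial every $\gamma_i$ is nonzero. One checks that rescaling the variables by scalars (an automorphism fixing $0$) changes neither $\lct_0(\a)$ nor $\tau$, so only the exponent vectors $a_i,b_i$ matter; one may also take $k=\bar k$. It is convenient to record the dual program: by linear programming duality,
\[
\tau=\min\Bigl\{\,\textstyle\sum_j w_j+\sum_i s_i\ \Big|\ \sum_j a_{ij}w_j+s_i\ge 1,\ \ \sum_j b_{ij}w_j+s_i\ge 1\ \ (1\le i\le r),\ \ w_j,s_i\in\Q_{\ge 0}\,\Bigr\},
\]
so that an optimal dual point is a monomial weight $w$ together with ``defects'' $s_i\ge 0$ with $\mathrm{ord}_w(f_i)=\min(\langle w,a_i\rangle,\langle w,b_i\rangle)\ge 1-s_i$.

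\emph{The bound $\lct_0(\a)\le\tau$.} I would read this off a log resolution $\pi\colon X\to\mathbb{A}^n_k$ of $\a$ built as follows: first a smooth toric modification $\pi_\Sigma\colon X_\Sigma\to\mathbb{A}^n_k$ for a fan $\Sigma$ refining the normal fans of the Newton polyhedra $\Gamma(f_i)=\mathrm{conv}\{a_i,b_i\}+\R^n_{\ge 0}$, followed, if necessary, by blow-ups along the strict transforms of the hypersurfaces $V(f_i)$. On each chart of $X_\Sigma$, in local monomial coordinates $u$, one has $f_i=(\text{monomial})\cdot(1-\gamma_i u^{d_i})$ with $d_i\neq0$, and the factor $1-\gamma_i u^{d_i}$ cuts out (a component of) the strict transform of $V(f_i)$; the non-degeneracy of binomials --- already used in Howald's case $r=1$ --- makes these strict transforms smooth and simple-normal-crossing with each other and with the toric boundary. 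The divisors that then appear are the toric divisors $E_\rho$ ($\rho$ a ray of $\Sigma$), contributing the constraints $\sum_i(a_{ij}\mu_i+b_{ij}\nu_i)\le 1$, and the strict transforms $\widetilde{V(f_i)}$, contributing $\mu_i+\nu_i\le1$; assembling them and feeding in the dual solution $(w,s)$ gives $\lct_0(\a)\le\sum_jw_j+\sum_is_i=\tau$. The one genuinely non-formal point is that along the $E_\rho$ the order of $\pi^{*}\a$ is the expected $\min_i\mathrm{ord}_w(f_i)$ --- i.e.\ that no extra monomials enter the initial ideals of $\a$ --- and this is where condition (1) or (2) is used (for a general binomial ideal, $S$-pairs would lower the order and the formula would fail).

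\emph{The bound $\lct_0(\a)\ge\tau$.} This is the heart of the matter. Granting that $\pi$ really is a log resolution --- in particular that near every point of $\pi^{-1}(0)$ the ideal $\pi^{*}\a$ is the product of a monomial ideal (the $\sigma$-initial ideal of $\a$, which must be shown to be $(u^{c_i})_i$ with $c_i=\min_w\{a_i,b_i\}$) and the ideal of an SNC union of strict transforms --- the log canonical threshold equals $\min_D (1+k_D)/m_D$ over the prime divisors $D$ on $X$ meeting $\pi^{-1}(0)$, and a polyhedral computation (on the primal side, choosing for each divisor a point $v_i\in[a_i,b_i]$ and a weight $\lambda_i\in[0,1]$) identifies this minimum with $\tau$; one also notes that configurations forcing $\mu_i+\nu_i>1$ do not arise precisely because $\a$ contains no monomial. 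Showing that $\pi$ is a log resolution --- equivalently, controlling the Gröbner degenerations of $\a$ --- is the real work and is where the two cases diverge: in case (1) one shows that the canonical generators of a binomial complete intersection form a standard basis for every weight occurring in $\Sigma$, using flatness/linkage of the complete intersection; in case (2) one invokes Herzog's classification, namely that either $\a$ is already a complete intersection (done), or $\a$ is the ideal of $2\times 2$ minors of an explicit $2\times 3$ monomial matrix, which one treats via its Hilbert--Burch resolution or via a toric embedded resolution of the monomial curve itself.

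\emph{The main obstacle.} The delicate step is the lower bound, and within it the claim that being a complete intersection (resp.\ Herzog's canonical system) prevents the initial ideals of $\a$ from acquiring unexpected monomial generators --- without this the theorem is simply false, as one sees for suitably chosen non-complete-intersection binomial ideals. If the determinantal case proved too awkward directly, I would fall back on characteristic $p>0$: since $\lct_0(\a)=\lim_{p\to\infty}\fpt(\a_p)$ and $\fpt(\a_p)\le\lct_0(\a)$ for $p\gg0$, it suffices to bound $\fpt(\a_p)$ below, and for binomial ideals $F$-purity is governed by an explicit Fedder-type combinatorial condition; one then checks that the optimal linear-programming data produces the $F$-splittings needed to give $\fpt(\a_p)\ge\tau-\varepsilon$ for infinitely many $p$, hence $\lct_0(\a)\ge\tau$.
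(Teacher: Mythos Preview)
Your proposal has the right two-inequality architecture, but it misplaces where the hypotheses (1)/(2) enter, and this leads you to propose the wrong machinery in both halves.

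\textbf{Upper bound.} The inequality $\lct_0(\a)\le\tau$ holds for \emph{any} choice of binomial generators $f_1,\dots,f_r$, with no need for (1) or (2). The paper obtains it in one line from the summation formula for multiplier ideals, which gives
\[
\lct_0(\a)=\sup\{\lambda_1+\cdots+\lambda_r\mid \J(f_1^{\lambda_1}\cdots f_r^{\lambda_r})_0=\sO,\ 0\le\lambda_i<1\},
\]
and since $f_i\in\a_{f_i}$ (the term ideal) this is $\le\sup\{\sum\lambda_i\mid \J(\a_{f_1}^{\lambda_1}\cdots\a_{f_r}^{\lambda_r})_0=\sO,\ 0\le\lambda_i<1\}$, which Howald's monomial formula identifies with $\tau$. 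Your toric log-resolution route is both more laborious and unnecessary here; moreover your worry that ``$S$-pairs would lower the order'' along a toric valuation is misplaced, since $\mathrm{ord}_w(\a)=\min_i\mathrm{ord}_w(f_i)$ automatically for any generating set.

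\textbf{Lower bound.} This is where (1)/(2) are actually used, and the paper's argument is \emph{entirely} the characteristic~$p$ one you list only as a fallback. The key technical point you are missing is the criterion of Proposition~\ref{criterion}: if the linear program admits an optimal solution $(\mu,\nu)$ such that $A(\mu,\nu)^{\mathrm T}\ne A(\mu',\nu')^{\mathrm T}$ for every other optimal solution, then for $p\equiv 1\pmod N$ the monomial $x^{m}$ with $m=A(\mu,\nu)^{\mathrm T}(q-1)$ occurs in the expansion of $\prod_i f_i^{(\mu_i+\nu_i)(q-1)}$ with a \emph{single} contributing term, namely $\prod_i(-\gamma_i)^{\nu_i(q-1)}\binom{(\mu_i+\nu_i)(q-1)}{\mu_i(q-1)}$, which is nonzero in $\F_p$ by Lucas. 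This gives $\nu_{\a_p}(q)\ge\sum_i(\mu_i+\nu_i)(q-1)$ and hence $\fpt(\a_p)\ge\tau$. The role of hypothesis (1) is then purely linear-algebraic: one shows $\mathrm{rank}\,A=2r$ (using that a regular sequence of binomials has height $r$ in the Laurent ring), so every optimal solution satisfies the uniqueness criterion. For (2), $\mathrm{rank}\,A=5$ with one-dimensional kernel $\Q\cdot(1,1,1,-1,-1,-1)^{\mathrm T}$, and the paper locates by a case analysis an optimal solution with some $\mu_i=\nu_j=0$, which again satisfies the criterion.

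\textbf{The gap in your primary lower-bound strategy.} Your plan to read off $\lct_0(\a)\ge\tau$ from an explicit $\pi$ runs into a basic obstruction: a toric modification adapted to the Newton polyhedra of the $f_i$, even followed by blow-ups along the individual strict transforms $\widetilde{V(f_i)}$, does \emph{not} in general make $\a\cdot\sO_{\widetilde X}$ invertible. On a chart where each $f_i$ becomes (monomial)$\cdot$(unit), the pullback of $\a$ is an ideal generated by several monomials and need not be principal; resolving it requires further toric blow-ups governed by the geometry of the ideal, not of the individual generators. Your ``standard basis for every weight'' claim in case (1) is also not something one gets for free from the complete-intersection hypothesis. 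The paper avoids all of this by never constructing a log resolution for the lower bound.
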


The proof depends on two techniques. 
The first technique is the summation formula of multiplier ideals \cite[Theorem 3.2]{Ta}, which tells us that $\lct_0(\a)$ is equal to 
\begin{equation}\tag{$\star$}
\sup\{\lambda_1+\dots+\lambda_r \mid \J(f_1^{\lambda_1} \cdots f_r^{\lambda_r})_0=k[x_1, \dots, x_n]_{(x_1, \dots, x_n)}, \; 0 \le 
\lambda_i <1 \},
\end{equation}
where $\J(f_1^{\lambda_1} \cdots f_r^{\lambda_r})$ is the multiplier ideal associated to $f_1^{\lambda_1} \cdots f_r^{\lambda_r}$ (see Definition \ref{multdef} for the definition of multiplier ideals). 
Let $\a_{f_i}$ be the ideal generated by monomials appearing in $f_i$ for each $1 \leq i \leq r$. Since $\a_{f_i}$ contains $f_i$, $(\star)$ is less than or equal to 
\begin{equation}\tag{$\star\star$}
\sup\{\lambda_1+\dots+\lambda_r \mid \J(\a_{f_1}^{\lambda_1} \cdots \a_{f_r}^{\lambda_r})_0=k[x_1, \dots, x_n]_{(x_1, \dots, x_n)}, \; 0 \le \lambda_i < 1 \}.
\end{equation}
It then follows from Howald's result that $(\star\star)$ coincides with the optimal value of the linear programming problem stated in Theorem \ref{main theorem}, and consequently we obtain one inequality in Theorem \ref{main theorem}. 

The second technique is reduction from characteristic zero to positive characteristic. 
For simplicity, we assume that $\a$ is an ideal of $\Q[x_1, \dots, x_n]$ and denote by $\a_p \subseteq \F_p[x_1, \dots, x_n]_{(x_1, \dots, x_n)}$ its reduction to characteristic $p$, where $p$ is a sufficiently large prime number. 
Then the F-pure threshold $\fpt(\a_p)$ is defined to be $\displaystyle{\lim_{e \to \infty}\frac{\nu_{\a_p}(p^e)}{p^e}}$, where $\nu_{\a_p}(p^e):=\max\{r \in \Z_{\ge 0} \mid \a_p^r \not\subseteq (x_1^{p^e}, \dots, x_n^{p^e})\}$. 
It follows from a result of Hara and Yoshida \cite{HY} that the limit $\lim_{p \to \infty} \fpt(\a_p)$ of F-pure thresholds coincides with the log canonical threshold $\lct_0(\a)$ of $\a$ at the origin. 
Therefore, in order to estimate $\lct_0(\a)$, it is enough to estimate $\fpt(\a_p)$ for infinitely many $p$. 
Under the assumption of Theorem \ref{main theorem}, we show that $\fpt(\a_p)$ is greater than or equal to the optimal value of the linear programming problem in Theorem \ref{main theorem} whenever $p \equiv 1 \textup{ mod } N$, where $N$ is a fixed positive integer. As a result, we obtain the reverse inequality in Theorem \ref{main theorem}. 

In the process of proving Theorem \ref{main theorem}, we give an affirmative answer to the conjecture \cite[Conjecture 3.6]{MTW} (see also \cite[Problem 3.7]{MTW}) due to Musta\c{t}\v{a}, Watanabe and second author, when $\a$ is a complete intersection binomial ideal or the defining ideal of a space monomial curve.

\section{Preliminaries}
\subsection{Log canonical thresholds}
In this subsection, we recall the definitions of multiplier ideals and log canonical thresholds. Our main reference is \cite{L}. 

Let $X$ be a nonsingular algebraic variety over a field $k$ of characteristic zero and $\a \subseteq \sO_X$ be an ideal sheaf of $X$. 
A \textit{log resolution} of $(X,\a)$ is a proper birational morphism $\pi:\widetilde{X} \to X$ with $\widetilde{X}$ a nonsingular variety such that $\a \sO_{\widetilde{X}}=\sO_{\widetilde{X}}(-F)$ is an invertible sheaf and that $\mathrm{Exc}(\pi) \cup \mathrm{Supp}(F)$ is a simple normal crossing divisor. 

\begin{defn}\label{multdef}
In the above situation, let $t>0$ be a real number. 
Fix a log resolution $\pi:\widetilde{X} \to X$ with $\a \sO_{\widetilde{X}}=\sO_{\widetilde{X}}(-F)$. 
The \textit{multiplier ideal} $\J(\a^t)$ of $\a$ with exponent $t$ is 
\[
\J(\a^t)=\mathcal{J}(X,\a^t)=\pi_*\sO_{\widetilde{X}}(K_{\widetilde{X}/X} - \lfloor tF \rfloor),
\]
where $K_{\widetilde{X}/X}$ is the relative canonical divisor of $\pi$. 
This definition is independent of the choice of the log resolution $\pi$. 
\end{defn}

\begin{defn}
In the above situation, fix a point $x \in X$ lying in the zero locus of $\a$. 
The \textit{log canonical threshold} of $\a$ at $x \in X$ is  
\[
\lct_x(\a)=\sup\{t \in \R_+ \mid \mathcal{J}(\a^t)_x=\sO_{X,x}\} 
\]
(when $x$ is not contained in the zero locus of $\a$, we put $\lct_x(\a)=\infty$). 
The log canonical threshold $\lct_x(\a)$ is a rational number. 
\end{defn}

When the ideal $\a$ is a monomial ideal or a principal ideal generated by a non-degenerate polynomial, there exists a combinatorial description of the multiplier ideal $\J(\a^t)$ by Howald \cite{Ho1}, \cite{Ho2}. 

\begin{prop}[\cite{Ho1}, \cite{Ho2}]\label{Howaldresults}
Let $k$ be a field $($of characteristic zero$)$. 

$(1)$ Let $\a$ be a monomial ideal of $k[x_1, \dots, x_n]$ and $P(\a) \subseteq \R^d$ be the Newton polytope of $\a$. 
Then for every real number $t>0$, 
$$\J(\a^t)=(x^{\mathbf{c}} \mid \mathbf{c} + \mathbf{1} \in \mathrm{Int}(t \cdot P(\a)) \cap \N^n),$$
where $\mathbf{1}:=(1, \dots, 1) \in \N^n$. 
In particular, if $\a=(x^{\mathbf{c}_1}, \dots, x^{\mathbf{c}_s})$, then 
\begin{align*}
\lct_0(\a)&=\sup\{t \in \R_+ \mid \mathbf{1} \in t \cdot P(\a) \}\\
                &=\max\left\{ \sum_{j=1}^s \lambda_j \Bigg| \sum_{j=1}^s \mathbf{c}_j \lambda_j \le \mathbf{1}, \ \lambda_j \in \Q_{\ge 0}\right\}.
\end{align*}

$(2)$ Let $f \in (x_1, \dots, x_n)$ be a non-degenerate polynomial of $k[x_1, \dots, x_n]$ $($see \cite{Ho2} for the definition of non-degenerate polynomials. For example, every binomial  is non-degenerate$)$. 
Let $\a_f \subseteq k[x_1 \dots, x_n]$ denote the term ideal of $f$, that is, the ideal generated by the monomials appearing in $f$. Then for every real number $t>0$, 
$$\J(f^t)=f^{\lfloor t \rfloor}\J(\a_f^{t-\lfloor t \rfloor}).$$
In particular, if $f=\sum_{j=1}^s \gamma_j x^{\mathbf{c}_j}$ where $\gamma_j \in k^*$ for all $j=1, \dots, s$, then
$$\lct_0(f)=\lct_0(\a_f)=\max\left\{\sum_{j=1}^s \lambda_j \Bigg| \sum_{j=1}^s \mathbf{c}_j \lambda_j \le \mathbf{1}, \ \lambda_j \in \Q_{\ge 0}\right\}.$$
\end{prop}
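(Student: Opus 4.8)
These are Howald's theorems \cite{Ho1, Ho2}; the plan is to prove both by writing down explicit \emph{toric} log resolutions. For (1), let $P=P(\a)\subseteq\R^n$ be the Newton polyhedron, an unbounded rational polyhedron with recession cone $\R^n_{\ge 0}$. I would first take the inner normal fan of $P$, refine it to a smooth fan $\Sigma$ subdividing $\R^n_{\ge 0}$, and let $\pi\colon\widetilde X=X_\Sigma\to\mathbb{A}^n=\Spec k[x_1,\dots,x_n]$ be the resulting proper birational toric morphism. For a ray $\rho\in\Sigma(1)$ write $v_\rho\in\N^n$ for its primitive generator, $E_\rho$ for the associated torus-invariant prime divisor, and set $\psi(v_\rho):=\min_{w\in P}\langle v_\rho,w\rangle\in\Z$. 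Two routine toric computations then give
\[
\a\sO_{\widetilde X}=\sO_{\widetilde X}(-F),\quad F=\sum_{\rho}\psi(v_\rho)E_\rho,\qquad K_{\widetilde X/\mathbb{A}^n}=\sum_{\rho}\bigl(\langle v_\rho,\mathbf 1\rangle-1\bigr)E_\rho,
\]
with $\mathbf 1=(1,\dots,1)$, and $\mathrm{Exc}(\pi)\cup\mathrm{Supp}(F)$ is a toric simple normal crossing divisor, so $\pi$ is a log resolution of $(\mathbb{A}^n,\a)$.

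Next I would compute $\J(\a^t)=\pi_*\sO_{\widetilde X}(K_{\widetilde X/\mathbb{A}^n}-\lfloor tF\rfloor)$. Since this is a monomial ideal it suffices to test monomials: $x^{\mathbf c}$ lies in it iff $\mathrm{div}(x^{\mathbf c})+K_{\widetilde X/\mathbb{A}^n}-\lfloor tF\rfloor\ge 0$, i.e.\ iff $\langle v_\rho,\mathbf c\rangle+\langle v_\rho,\mathbf 1\rangle-1\ge\lfloor t\psi(v_\rho)\rfloor$ for every $\rho\in\Sigma(1)$; as the left-hand side is an integer this is the strict inequality $\langle v_\rho,\mathbf c+\mathbf 1\rangle>t\psi(v_\rho)$, i.e.\ $\mathbf c+\mathbf 1$ lies strictly inside the halfspace of $tP$ with inner normal $v_\rho$. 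The step that needs care is the combinatorial reduction that imposing these strict inequalities for \emph{all} rays of $\Sigma$ is the same as imposing them for the inner facet normals of $P$ alone (the remaining ones are nonnegative combinations of these, hence automatic), and that the region so cut out is exactly $\mathrm{Int}(tP)$; this yields $\J(\a^t)=(x^{\mathbf c}\mid\mathbf c+\mathbf 1\in\mathrm{Int}(tP)\cap\N^n)$. For the threshold, $\J(\a^t)_0=\sO_0$ iff the monomial $1=x^{\mathbf 0}$ lies in $\J(\a^t)$, i.e.\ iff $\mathbf 1\in\mathrm{Int}(tP)$, so $\lct_0(\a)=\sup\{t\mid\mathbf 1\in\mathrm{Int}(tP)\}=\sup\{t\mid\mathbf 1\in tP\}$; writing points of $tP$ as $\sum_j\lambda_j\mathbf c_j$ plus a nonnegative vector with $\lambda_j\ge 0$, $\sum_j\lambda_j=t$, turns this into the stated linear program, which is rational.

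For (2) I would use the \emph{same} morphism $\pi$, now built from $P(\a_f)$. Pulling back the principal divisor $V(f)$ gives $\pi^*V(f)=F+\widetilde F$, where $F=\sum_\rho\psi(v_\rho)E_\rho$ is the ``monomial part'' ($\psi$ now for $P(\a_f)$) and $\widetilde F$ is the strict transform of $V(f)$, which has coefficient $0$ along every $E_\rho$. This is where the non-degeneracy hypothesis enters, and it is the main obstacle: I would need to show that $\widetilde F$ is nonsingular and meets every toric stratum transversally, so that for a sufficiently fine smooth $\Sigma$ the variety $\widetilde X$ is already a log resolution of $(\mathbb{A}^n,(f))$ whose exceptional coefficients are still read off from $P(\a_f)$. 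Granting this, $\lfloor t\,\pi^*V(f)\rfloor=\lfloor tF\rfloor+\lfloor t\rfloor\widetilde F$, and pushing forward — using the projection formula and a routine bookkeeping of integer parts, where integrality of the $\psi(v_\rho)$ is used — gives
\[
\J(f^t)=\pi_*\sO_{\widetilde X}\bigl(K_{\widetilde X/\mathbb{A}^n}-\lfloor tF\rfloor-\lfloor t\rfloor\widetilde F\bigr)=f^{\lfloor t\rfloor}\,\J\bigl(\a_f^{\,t-\lfloor t\rfloor}\bigr).
\]
Since $f^{\lfloor t\rfloor}$ is a unit in $\sO_0$ exactly when $\lfloor t\rfloor=0$, combining the display with Part (1) yields the asserted value $\lct_0(f)=\lct_0(\a_f)$ and the linear programming formula.
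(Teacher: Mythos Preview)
The paper does not give its own proof of this proposition: it is quoted as a result of Howald and cited to \cite{Ho1,Ho2} without argument. Your outline is, in fact, a faithful sketch of Howald's original proofs --- a toric log resolution built from a smooth refinement of the inner normal fan of the Newton polyhedron, the standard toric formulas for $\a\sO_{\widetilde X}$ and $K_{\widetilde X/\mathbb{A}^n}$, and (for part~(2)) the use of non-degeneracy to guarantee that the strict transform $\widetilde F$ is smooth and transverse to the toric boundary. So there is nothing to compare: your approach \emph{is} the approach of the cited sources, and the steps you flag as needing care (the combinatorial reduction in~(1), the transversality in~(2)) are exactly the places where the real work lies.

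There is one genuine slip, in your final sentence. From the formula $\J(f^t)=f^{\lfloor t\rfloor}\J(\a_f^{\,t-\lfloor t\rfloor})$ together with $f\in\m$, what you actually obtain is
\[
\lct_0(f)=\min\bigl(1,\ \lct_0(\a_f)\bigr),
\]
not $\lct_0(f)=\lct_0(\a_f)$ outright. For instance, with $f=x-y\in k[x,y]$ (a binomial, hence non-degenerate) one has $\lct_0(f)=1$ while $\lct_0(\a_f)=\lct_0((x,y))=2$, and the linear program in the statement also returns~$2$. This is in fact a small imprecision in the ``In particular'' clause of part~(2) as recorded here; the paper never uses that clause, only the multiplier-ideal identity $\J(f^t)=f^{\lfloor t\rfloor}\J(\a_f^{\,t-\lfloor t\rfloor})$ for $0\le t<1$, which your argument does establish. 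If you want the unqualified equality $\lct_0(f)=\lct_0(\a_f)$, you must add the hypothesis $\lct_0(\a_f)\le 1$.
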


Since the multiplier ideal $\J(\a^t)$ is defined via a log resolution of $\a$, it is difficult to compute the log canonical threshold $\lct_x(\a)$ in general, even when the ideal $\a$ is generated by binomials. 
\begin{eg}
Let   
$\a=(x^3-yz, y^2-xz, z^2-x^2y) \subseteq k[x,y,z]$ be the defining ideal of the monomial curve $\Spec k[t^3,t^4,t^5]$ in the affine space $\mathbb{A}_k^3$, where $k$ is a field. 
We consider the following sequence of blowing-ups:
\[
\mathbb{A}_k^3=X\stackrel{f_1}{\longleftarrow}X_1\stackrel{f_2}{\longleftarrow}X_2\stackrel{f_3}{\longleftarrow}X_3\stackrel{f_4}{\longleftarrow}X_4\stackrel{f_5}{\longleftarrow}X_5=\widetilde{X}.
\]
We denote by $C_i$ the strict transform of $C=V(\a)$ on $X_i$ and by $E_i$ the exceptional divisor of $f_i$ (and we use the same letter for its strict transform). 
Let  
$f_1$ be the blowing-up at the origin, 
$f_2$ be the blowing-up at the point $(C_1\cap E_1)_{\mathrm{red}}$, 
$f_3$ be the blowing-up at the point $C_2\cap E_2$, 
$f_4$ be the blowing-up at the point $C_3\cap E_3$ 
and $f_5$ be the blowing-up along the curve $C_4$. Then 
$\pi:=f_1\circ \dots\circ f_5:\widetilde{X}\to X$ is a log resolution of $\a$, 
and we have

\begin{align*}
K_{\widetilde{X}/X}&=2E_1+4E_2+8E_3+12E_4+E_5,\\
\a\mathcal{O}_{\widetilde{X}}&=\mathcal{O}_{\widetilde{X}}(-2E_1-3E_2-6E_3-9E_4-E_5).
\end{align*}
 Thus,
 $$\lct_0(\a)=\min\left\{\frac{2+1}{2}, \frac{4+1}{3}, \frac{8+1}{6}, \frac{12+1}{9}, \frac{1+1}{1}\right\}=\frac{13}{9}.$$
Even in this case, it is not so easy to determine all jumping coefficients of $\a$. The reader is referred to \cite{Sh} for the computation of further jumping coefficients of $\a$. 
\end{eg}

\subsection{F-pure thresholds}
In this subsection, we recall the definitions of generalized test ideals introduced by Hara and Yoshida in \cite{HY} and of F-pure thresholds introduced by Watanabe and the second author in \cite{TW}. 

Let $R$ be a Noetherian ring containing a field of characteristic $p>0$. 
The ring $R$ is called \textit{F-finite} if $R$ is a finitely generated module over its subring $R^p = (a^p \in R \mid a \in R)$. 
For each $e \in \N$, if $J$ is an ideal in $R$, then $J^{[p^e]}$ denotes the ideal $(x^{p^e} \mid x \in J)$.

Since we restrict ourselves to the case of an ambient nonsingular variety in this paper, we refer to Blickle--Musta\c{t}\v{a}-Smith's characterization \cite{BMS} as the definition of generalized test ideals.  

\begin{defn}[\textup{\cite[Definition 2.9, Proposition 2.22]{BMS}}]\label{testidealdef}
Let $R$ be an F-finite regular ring of prime characteristic $p$ and $\a$ be an ideal of $R$. 
For a given real number $t>0$, the generalized test ideal $\tau (\a^t)$ of $\a$ with exponent $t$ is the unique smallest ideal $J$ with respect to inclusion, such that
\[
\a^{\lceil qt \rceil}\subseteq J^{[q]},
\]
for all sufficiently large $q=p^e$. 
\end{defn}

\begin{defn}[\textup{\cite[Definition 2.1]{TW}}]
Let the notation be the same as in Definition \ref{testidealdef}.  
The F-pure threshold of $(R,\a)$ is 
\[
\fpt(\a)=\sup\{t \in \R_+ \mid \tau(\a^t)=R\}.
\]
If $(R,\m)$ is a regular local ring, then for each $e \in \N$, we set $\nu_{\a}(p^e)$ to be the largest nonnegative integer $r$ such that $\a^r\not\subseteq \m^{[p^e]}$.  
Then
$$\fpt(\a)=\lim_{e \to \infty}\frac{\nu_{\a}(p^e)}{p^e}.$$ 
\end{defn}

Now we briefly review the correspondence between multiplier ideals and generalized test ideals. 

Let $A$ be the localization of $\Z$ at some nonzero integer $a$. 
We fix a nonzero ideal $\a$ of the polynomial ring $A[x_1, \dots, x_n]$ such that $\a \subseteq (x_1, \dots, x_n)$. 
Let $\a_{\Q}:=\a \cdot \Q[x_1, \dots, x_n]$ and $\a_p := \a \cdot \F_p[x_1, \dots, x_n]_{(x_1, \dots, x_n)}$, where $p$ is a prime number which does not divide $a$ and $\F_p:= \Z/p\Z$.  
We call the pair $(\F_p[x_1, \dots, x_n]_{(x_1, \dots, x_n)},\a_p)$ the reduction of $(\Q[x_1, \dots, x_n],\a_\Q)$ to characteristic $p$. 
Let $\pi_\Q: Y_\Q\to \mathbb{A}_{\Q}^n$ be a log resolution of $\a_\Q$ (the existence of such a morphism is guaranteed by Hironaka's desingularization theorem \cite{Hi}).  
After further localizing $A$, we may assume that $\pi_{\Q}$ is obtained by extending the scalars from a morphism $\pi:Y \to \mathbb{A}_A^n$. 
For sufficiently large $p\gg 0$, the morphsim $\pi$ induces a log resolution $\pi_p=Y_p\to \Spec \F_p[x_1, \dots, x_n]_{(x_1, \dots, x_n)}$ of $\a_p$, and we can use $\pi_p$ to define the multiplier ideal $\J(\a_p^t)$ for a given real number $t>0$. 
Then $\mathcal{J}(\a_p^t)$ is the reduction of the multiplier ideal $\mathcal{J}(\a_\Q^t)$ of $\a_{\Q}$ to characteristic $p$. 

Hara and Yoshida discovered a connection between $\mathcal{J}(\a_p^t)$ and $\tau(\a_p^t)$ in \cite{HY}. 

\begin{thm}[\textup{\cite[Theorem 3.4, Proposition 3.8]{HY}}]
Let the notation be as above. 
\begin{enumerate}
\item If $p$ is sufficiently large, then for every real number $t>0$, we have 
$$\tau(\a_p^t) \subseteq \J(\a_p^t).$$

\item
For a given real number $t>0$, if $p$ is sufficiently large $($how large $p$ has to be depends on $t)$, then  
$$\tau(\a_p^t)=\mathcal{J}(\a_p^t).$$ 
\end{enumerate}
\end{thm}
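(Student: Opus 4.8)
The plan is to transfer both statements to the log resolution $\pi_p\colon Y_p\to X_p:=\Spec\F_p[x_1,\dots,x_n]_{(x_1,\dots,x_n)}$, on which $\a_p$ becomes invertible, say $\a_p\sO_{Y_p}=\sO_{Y_p}(-F)$ with $\mathrm{Supp}(F)\cup\mathrm{Exc}(\pi_p)$ simple normal crossing, and to exploit the description of generalized test ideals by Frobenius traces. Recall that for a regular $F$-finite ring $R$ of characteristic $p$ the module $\mathrm{Hom}_R(F^e_*R,R)$ is free of rank one over $F^e_*R$; fixing a surjective generator $\phi^e_R\colon F^e_*R\to R$, one has $\tau(\a^t)=\sum_{e\ge 0}\phi^e_R(F^e_*(\a^{\lceil tp^e\rceil}))$, and the analogous maps twisted by canonical modules compute the test ideal on a variety. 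Two elementary local computations will be used repeatedly: (i) on the simple normal crossing divisor $F$ over the regular scheme $Y_p$, the twisted trace $\phi^e_{Y_p}$ carries $F^e_*\sO_{Y_p}(K_{Y_p/X_p}-\lceil tp^e\rceil F)$ into $\sO_{Y_p}(K_{Y_p/X_p}-\lfloor tF\rfloor)$ for every $e$, with the sum over $e$ equal to the target; (ii) $\pi_{p*}\sO_{Y_p}(K_{Y_p/X_p})=\sO_{X_p}$, since $X_p$ is regular, so that $\pi_{p*}\sO_{Y_p}(K_{Y_p/X_p}-\lfloor tF\rfloor)=\J(\a_p^t)$ lies in $\sO_{X_p}$.

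\smallskip\noindent\textbf{Part (1).} For the inclusion $\tau(\a_p^t)\subseteq\J(\a_p^t)$ I would use the compatibility of Frobenius traces under $\pi_p$: Grothendieck--Serre duality applied to the finite morphism $F^e$ and to the proper birational morphism $\pi_p$ (using only that $X_p$ is regular and that $p$ is large enough for $\pi_p$ to remain a log resolution of $\a_p$) produces a commutative square identifying $\phi^e_{X_p}$ with $\pi_{p*}$ of the canonically twisted $\phi^e_{Y_p}$; no vanishing theorem is needed here. Applying $\pi_{p*}$ to the inclusion in (i), and using that $\a_p^{\lceil tp^e\rceil}\sO_{Y_p}=\sO_{Y_p}(-\lceil tp^e\rceil F)\subseteq\sO_{Y_p}$ while $\pi_p$ is an isomorphism over the generic point, one obtains $\phi^e_{X_p}(F^e_*\a_p^{\lceil tp^e\rceil})\subseteq\pi_{p*}\sO_{Y_p}(K_{Y_p/X_p}-\lfloor tF\rfloor)=\J(\a_p^t)$ for each $e$; summing over $e$ gives the claim. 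Because only the resolution data and the duality compatibilities enter, the needed bound on $p$ does not depend on $t$.

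\smallskip\noindent\textbf{Part (2).} It remains to prove $\J(\a_p^t)\subseteq\tau(\a_p^t)$. By (i), for $e\gg 0$ the twisted trace $\phi^e_{Y_p}$ surjects $F^e_*\bigl(\a_p^{\lceil tp^e\rceil}\sO_{Y_p}\otimes\sO_{Y_p}(K_{Y_p/X_p})\bigr)$ onto $\sO_{Y_p}(K_{Y_p/X_p}-\lfloor tF\rfloor)$; the point is to descend this surjectivity along $\pi_{p*}$. The obstruction lies in $R^{i}\pi_{p*}$ ($i>0$) of the relevant sheaves and vanishes once one knows the reduction mod $p$ of the characteristic-zero local vanishing statement $R^i\pi_{\Q*}\sO_{Y_\Q}(K_{Y_\Q/X_\Q}-\lfloor tF\rfloor)=0$ for $i>0$. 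For a fixed $t$ this is a fixed cohomology-vanishing assertion, hence passes to the reduction for all $p\gg 0$; but since the round-down $\lfloor tF\rfloor$ varies with $t$, the threshold on $p$ depends on $t$ --- which is precisely the difference between (1) and (2). Combining the descended surjectivity with (ii) exhibits every local section of $\J(\a_p^t)$ as $\pi_{p*}\phi^e_{X_p}$ of an element of $F^e_*\a_p^{\lceil tp^e\rceil}$, hence as a member of $\tau(\a_p^t)$; with Part (1) this yields the equality.

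\smallskip\noindent\textbf{Main obstacle.} The hard part is the birational compatibility of Frobenius trace maps used in Part (1): writing down the duality isomorphisms for $F^e$ and for $\pi_p$ precisely and verifying that the resulting square commutes, so that the elementary simple normal crossing computation on $Y_p$ transfers to $X_p$. In Part (2) the delicate point is instead quantitative --- making the reduction mod $p$ of local vanishing effective enough to conclude, and tracking honestly how ``$p\gg 0$'' depends on $t$. By comparison, the monomial computations in (i) and the identity in (ii) are routine.
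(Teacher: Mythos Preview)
The paper does not prove this theorem; it is quoted verbatim from Hara--Yoshida \cite[Theorem~3.4, Proposition~3.8]{HY} and used as a black box (its only role is to justify Corollary~\ref{lct vs fpt}). So there is no ``paper's own proof'' to compare against.

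That said, your sketch is a faithful outline of the original Hara--Yoshida argument: Part~(1) via the trace-compatibility square between $X_p$ and $Y_p$ together with the elementary SNC computation upstairs, and Part~(2) via reduction mod $p$ of local vanishing to descend surjectivity of the twisted trace. Your identification of where the $t$-dependence of $p$ enters (the round-down $\lfloor tF\rfloor$ changes with $t$, so a different vanishing statement must be reduced for each $t$) is exactly right. One small caution: in your description of (i) you write that the sum over $e$ \emph{equals} the target sheaf $\sO_{Y_p}(K_{Y_p/X_p}-\lfloor tF\rfloor)$; this surjectivity is immediate locally on an SNC chart, but you should be explicit that it already holds for a single sufficiently divisible $e$ (indeed $e=1$ suffices once $p$ exceeds the coefficients of $F$), since Part~(2) needs a single $e$, not a union. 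Otherwise the strategy is correct and matches the cited source.
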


We reformulate the above results in terms of thresholds.

\begin{cor}\label{lct vs fpt}
Let the notation be as above. 
\begin{enumerate}
\item
If $p \gg 0$, then $\fpt(\a_p) \leq \lct_0(\a_\Q)$.

\item
$\lct_0(\a_{\Q})=\lim_{p\to \infty}\fpt(\a_p)$. 
\end{enumerate}
\end{cor}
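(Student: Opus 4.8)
The plan is to read both statements off the theorem of Hara and Yoshida \cite{HY} recalled just above, using the definitions of $\fpt$ and $\lct_0$ as the suprema of the exponents at which the test ideal, respectively the multiplier ideal, is trivial. Write $R_p:=\F_p[x_1,\dots,x_n]_{(x_1,\dots,x_n)}$, and let $\lct_0(\a_p):=\sup\{t\in\R_+\mid \J(\a_p^t)=R_p\}$ denote the log canonical threshold of $\a_p$ computed from the log resolution $\pi_p$. Beyond the two parts of the Hara--Yoshida theorem, the only input I shall need is that $\lct_0(\a_p)=\lct_0(\a_\Q)$ for $p\gg0$; this is immediate from the compatibility of multiplier ideals with reduction modulo $p$ recorded in the discussion preceding that theorem, since $\lct_0$ is computed from the numerical invariants of the fixed resolution $\pi$, which are insensitive to reduction for $p\gg0$.

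For part (1), I would first invoke the first part of the Hara--Yoshida theorem: there is a $p_0$ such that $\tau(\a_p^t)\subseteq\J(\a_p^t)$ for all $p\ge p_0$ and all $t>0$. For such $p$ this gives
\[
\fpt(\a_p)=\sup\{t\in\R_+\mid \tau(\a_p^t)=R_p\}\le\sup\{t\in\R_+\mid \J(\a_p^t)=R_p\}=\lct_0(\a_p),
\]
and, enlarging $p_0$ so that also $\lct_0(\a_p)=\lct_0(\a_\Q)$, I conclude $\fpt(\a_p)\le\lct_0(\a_\Q)$.

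For part (2), the bound $\limsup_{p\to\infty}\fpt(\a_p)\le\lct_0(\a_\Q)$ is immediate from (1), so it remains to prove $\liminf_{p\to\infty}\fpt(\a_p)\ge\lct_0(\a_\Q)$. I would fix a real number $t<\lct_0(\a_\Q)$. For $p\gg0$ one then has $t<\lct_0(\a_\Q)=\lct_0(\a_p)$, hence $\J(\a_p^t)=R_p$; and, by the second part of the Hara--Yoshida theorem applied to this fixed $t$, one also has $\tau(\a_p^t)=\J(\a_p^t)$ once $p\gg0$. Combining, $\tau(\a_p^t)=R_p$ and therefore $\fpt(\a_p)\ge t$ for all $p\gg0$, which gives $\liminf_{p\to\infty}\fpt(\a_p)\ge t$. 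Since $t<\lct_0(\a_\Q)$ was arbitrary, $\liminf_{p\to\infty}\fpt(\a_p)\ge\lct_0(\a_\Q)$, and with the $\limsup$ bound this yields $\lim_{p\to\infty}\fpt(\a_p)=\lct_0(\a_\Q)$.

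The argument is essentially bookkeeping; the point that will need attention is the interplay of the two ``$p\gg0$'' quantifiers. In part (1) it is essential that the containment $\tau(\a_p^t)\subseteq\J(\a_p^t)$ holds, for a single $p$, \emph{uniformly in} $t$ --- because $\fpt(\a_p)$ is a supremum over $t$ --- and this is precisely the form in which the first part of the Hara--Yoshida theorem is stated. In part (2), by contrast, it is harmless that the lower bound on $p$ supplied by the second part of that theorem depends on the chosen $t$, since there we use the equality $\tau(\a_p^t)=\J(\a_p^t)$ for one fixed $t$ at a time and only afterwards let $p\to\infty$. The auxiliary fact $\lct_0(\a_p)=\lct_0(\a_\Q)$ for $p\gg0$ is routine given the reduction-compatibility of multiplier ideals discussed earlier, and I do not anticipate any further difficulty.
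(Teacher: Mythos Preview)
Your argument is correct and is exactly the reformulation the paper has in mind: the paper does not give a separate proof of this corollary, merely stating that it ``reformulate[s] the above results in terms of thresholds,'' and your proof is precisely the unpacking of that sentence from the two parts of the Hara--Yoshida theorem together with the reduction-compatibility of multiplier ideals. Your attention to the uniformity in $t$ of part (1) versus the $t$-dependence in part (2) is on point and matches how the theorem is phrased.
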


In particular, if there exist $M \in \Q$ and $N \in \N$ such that $M(q-1)=\nu_{\a_p}(q)$ for all $q=p^e$ whenever $p \equiv 1 \textup{ mod } N$, then one has $\lct_0(\a_{\Q})=M$. 

\begin{conj}[\textup{\cite[Conjecture 3.6]{MTW}}]\label{infinity primes}
In the above situation, there are infinitely many primes $p$ such that $\fpt(\a_p)=\lct(\a_{\Q})$.
\end{conj}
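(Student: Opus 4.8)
\medskip
\noindent\emph{Sketch of the intended proof.} The plan is to establish the conjecture for the two classes of binomial ideals appearing in Theorem~\ref{main theorem} --- it is open in general --- obtaining it as a by-product of the proof of that theorem rather than of its statement alone. Let $\a$ be a complete intersection binomial ideal or the defining ideal of a space monomial curve in $\mathbb{A}^3_k$ as in Theorem~\ref{main theorem}; after the usual spreading-out argument we may assume $\a$ is defined over the ring $A$ of the paragraph preceding this conjecture. Let $M$ be the optimal value of the linear programming problem in Theorem~\ref{main theorem}; since its feasible region is a bounded rational polytope, the optimum is attained at a rational vertex, so $M\in\Q$ and there is an optimal solution $(\mu_i,\nu_i)_{i=1}^r\in\Q_{\ge0}^{2r}$. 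Let $N$ be a common denominator of all the $\mu_i$ and $\nu_i$, enlarged if necessary so that the construction below goes through. By Theorem~\ref{main theorem} we already know $\lct_0(\a_\Q)=M$, so it suffices to exhibit infinitely many primes $p$ with $\fpt(\a_p)=M$; and since Dirichlet's theorem on primes in arithmetic progressions supplies infinitely many primes $p\equiv1\ (\mathrm{mod}\ N)$, it is enough to prove $\fpt(\a_p)=M$ for all such $p$ that are sufficiently large.

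First I would record the cheap inequality: for $p\gg0$, Corollary~\ref{lct vs fpt}(1) gives $\fpt(\a_p)\le\lct_0(\a_\Q)=M$. For the reverse inequality I would run the positive-characteristic estimate sketched in the introduction. Fix $p\equiv1\ (\mathrm{mod}\ N)$ with $p\gg0$ and put $q=p^e$. Then $N\mid p^e-1=q-1$, so $s_i:=(q-1)\mu_i$ and $t_i:=(q-1)\nu_i$ are nonnegative integers and $M(q-1)=\sum_{i=1}^r(s_i+t_i)\in\Z_{\ge0}$. Writing $f_i=u_i-\gamma_iv_i$ with $u_i,v_i$ monomials, the element $g_q:=\prod_{i=1}^r f_i^{\,s_i+t_i}$ lies in $\a_p^{\,M(q-1)}$, and the monomial obtained by selecting $u_i^{s_i}v_i^{t_i}$ from the $i$-th factor has $x_j$-exponent $\sum_i(a_{ij}s_i+b_{ij}t_i)=(q-1)\sum_i(a_{ij}\mu_i+b_{ij}\nu_i)\le q-1$ for every $j$, by the first family of linear-programming constraints. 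Hence, provided this monomial survives in $g_q$ with a coefficient that is nonzero in $\F_p$, we get $g_q\notin(x_1^q,\dots,x_n^q)$, i.e.\ $\nu_{\a_p}(q)\ge M(q-1)$; dividing by $q$ and letting $e\to\infty$ yields $\fpt(\a_p)\ge M$, and therefore $\fpt(\a_p)=M=\lct_0(\a_\Q)$. Ranging over the infinitely many primes $p\equiv1\ (\mathrm{mod}\ N)$ with $p\gg0$ then proves the conjecture for $\a$.

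The hard part is the last proviso: showing that the distinguished monomial $u_1^{s_1}v_1^{t_1}\cdots u_r^{s_r}v_r^{t_r}$ does not cancel modulo $p$. Distinct choices of exponents in the binomial expansions can collapse to the same monomial, so its coefficient in $g_q$ is a sum of products of binomial coefficients times powers of the $\gamma_i$, and one must show this sum is a unit in $\F_p$ and control how large $p$ has to be. This is precisely where the hypotheses enter: that $\a$ contains no monomials, together with the regular-sequence structure in case~(1) and the explicit form of the canonical generators of a space monomial curve in case~(2). I expect this analysis to be case-by-case and to make up the substance of Theorems~\ref{CImainresult} and~\ref{nonCImonomialcurve}; granting those, the passage to the conjecture is the short argument above.
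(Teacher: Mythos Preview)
Your proposal is correct and matches the paper's approach: the conjecture is not proved in general but only for the two classes in Theorem~\ref{main theorem}, and in those cases it is read off from parts~(2) of Theorems~\ref{CImainresult} and~\ref{nonCImonomialcurve}, whose proofs proceed exactly as you outline---pick a rational optimal solution, let $N$ be a common denominator, and for $p\equiv1\pmod N$ show $\nu_{\a_p}(q)\ge M(q-1)$ by exhibiting a surviving monomial in $\prod_i f_i^{(\mu_i+\nu_i)(q-1)}$, combining this with Corollary~\ref{lct vs fpt}(1) for the upper bound. The one refinement the paper makes explicit, which you subsume under ``the hard part,'' is the criterion in Proposition~\ref{criterion}: rather than analyzing a sum of binomial-coefficient products, one chooses an optimal solution $(\mu,\nu)$ with the property that $A(\mu,\nu)^{\mathrm T}\ne A(\mu',\nu')^{\mathrm T}$ for every other optimal solution $(\mu',\nu')$, which forces the coefficient of the distinguished monomial to be a \emph{single} product and hence nonzero in $\F_p$ by Lucas; the case analyses in Theorems~\ref{CImainresult} and~\ref{nonCImonomialcurve} then verify that such a solution exists (via $\mathrm{rank}\,A=2r$ in the complete intersection case, and by explicit construction in the monomial-curve case).
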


Thanks to Corollary \ref{lct vs fpt}, we can compute log canonical thresholds using F-pure thresholds. 
We give an easy example here. 
\begin{eg}
Let $f=x^a+y^b \in \Z[x,y]$ with $a, b \geq 2$ integers, and we will compute $\lct_0(f_{\Q})$ using $\nu_{f_p}(q)$. 
Choose any prime number $p$ such that $p \equiv 1 \textup{ mod } ab$.
Since the binomial coefficient $\binom{(1/a+1/b)(q-1)}{(1/a)(q-1)}$ is nonzero in $\F_p$ for all $q=p^e$ by Lemma \ref{Lucas},  the term $(xy)^{q-1}$ appears in the expansion of $f_p^{(1/a+1/b)(q-1)}$. This implies that $(1/a+1/b)(q-1) \leq \nu_{f_p}(q)$ for all $q=p^e$, and its reverse inequality is easy to check. Thus, by virtue of Corollary \ref{lct vs fpt}, one has $\lct_0(f_{\Q})=1/a+1/b$. 
\end{eg}

In the above example, we used the following lemma, which we will also need later. 

\begin{lem}[Lucas]\label{Lucas}
Let $p$ be a prime number, and let $m$ and $n$ be integers with $p$-adic expansions $m=\sum m_ip^i$ and $n=\sum n_ip^i$. 
Then 
\[
\binom {m}{n} = \prod_i \binom {m_i}{n_i} \mbox{~~~in~~~} \F_p. 
\]
In particular, if $0<r_1\le r_2\le 1$ are rational numbers such that $r_1(p-1)$ and $r_2(p-1)$ are integers, then for all $e \in \N$, we have 
\[
\binom{r_1(p^e-1)}{r_2(p^e-1)}=\binom{r_1(p-1)}{r_2(p-1)}^e\neq 0 \mbox{~~~in~~~} \F_p~.
\]
\end{lem}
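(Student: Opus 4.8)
The plan is to establish the multiplicative congruence first and then read off the ``in particular'' statement from the base-$p$ digits of $r(p^e-1)$. For the identity $\binom{m}{n}=\prod_i\binom{m_i}{n_i}$ in $\F_p$, I would pass to the polynomial ring $\F_p[t]$ and use the Frobenius identity $(1+t)^p=1+t^p$, which holds because $p\mid\binom{p}{k}$ for $0<k<p$. Iterating it gives $(1+t)^{p^i}=1+t^{p^i}$ for every $i\ge 0$, and hence, writing $m=\sum_i m_i p^i$ with $0\le m_i\le p-1$,
\[
(1+t)^m=\prod_i\bigl((1+t)^{p^i}\bigr)^{m_i}=\prod_i\bigl(1+t^{p^i}\bigr)^{m_i}=\prod_i\Biggl(\,\sum_{j=0}^{m_i}\binom{m_i}{j}t^{jp^i}\Biggr)\quad\text{in }\F_p[t].
\]
The coefficient of $t^n$ on the left is $\binom{m}{n}$ by the binomial theorem. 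On the right, a monomial $t^n$ can only arise from a tuple $(j_i)$ with $0\le j_i\le m_i\le p-1$ and $\sum_i j_i p^i=n$; by uniqueness of the base-$p$ expansion of $n$ the only such tuple is $(n_i)$, and in particular $\binom{m}{n}=0$ in $\F_p$ whenever $n_i>m_i$ for some $i$. Comparing the coefficients of $t^n$ yields $\binom{m}{n}=\prod_i\binom{m_i}{n_i}$.

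For the second assertion, I would first record the base-$p$ expansion of $r(p^e-1)$ when $0<r\le 1$ and $d:=r(p-1)\in\Z$. Then $0\le d\le p-1$, and since $p^e-1=(p-1)(1+p+\dots+p^{e-1})$ we get $r(p^e-1)=d(1+p+\dots+p^{e-1})=\sum_{i=0}^{e-1}d\,p^i$; as $0\le d\le p-1$, this is exactly the base-$p$ expansion of $r(p^e-1)$, every digit of which equals $d=r(p-1)$. Applying the first part with $m=r_2(p^e-1)\ge n=r_1(p^e-1)$ --- so that $m_i=r_2(p-1)$ and $n_i=r_1(p-1)$ for $0\le i\le e-1$ and $m_i=n_i=0$ otherwise --- gives $\binom{r_2(p^e-1)}{r_1(p^e-1)}=\binom{r_2(p-1)}{r_1(p-1)}^e$ in $\F_p$. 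Since $0\le r_1(p-1)\le r_2(p-1)\le p-1$, the quantity $\binom{r_2(p-1)}{r_1(p-1)}$ is an ordinary binomial coefficient $\binom{a}{b}$ with $0\le b\le a<p$; each of $a!$, $b!$, $(a-b)!$ is a product of positive integers smaller than $p$ and hence prime to $p$, so $p\nmid\binom{a}{b}$ and the $e$-th power is nonzero in $\F_p$.

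Since this is a classical statement I do not expect a genuine obstacle; the one point that deserves care is the coefficient comparison in the first step, namely justifying through the uniqueness of base-$p$ representations that no ``carrying'' occurs when the monomials $t^{j_i p^i}$ with $j_i\le p-1$ are multiplied, so that the coefficient of $t^n$ in $\prod_i(1+t^{p^i})^{m_i}$ is precisely $\prod_i\binom{m_i}{n_i}$ rather than a sum over several digit patterns.
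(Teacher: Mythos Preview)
The paper does not prove this lemma; it simply records it as Lucas' classical result and uses it later. Your proof is correct and is the standard generating-function argument for Lucas' theorem, followed by the observation that for $0<r\le 1$ with $r(p-1)\in\Z$ every base-$p$ digit of $r(p^e-1)$ equals $r(p-1)$.

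One remark: you silently (and correctly) repaired a typo in the statement. As written, the paper has $\binom{r_1(p^e-1)}{r_2(p^e-1)}$ with $r_1\le r_2$, which would vanish; the intended assertion---consistent with how the lemma is applied in the proof of Proposition~\ref{criterion} to $\binom{(\mu_i+\nu_i)(q-1)}{\mu_i(q-1)}$---is $\binom{r_2(p^e-1)}{r_1(p^e-1)}=\binom{r_2(p-1)}{r_1(p-1)}^e\ne 0$, exactly as you wrote it. It would be worth flagging this explicitly rather than swapping the indices without comment.
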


\section{Complete intersection case}
In this section, we will prove that the log canonical thresholds of complete intersection binomial ideals are computable by linear programming. We start with our main technical result. 
\begin{prop}\label{criterion}
Let $S:=k[x_1, \dots, x_n]$ be the $n$-dimensional polynomial ring over a field $k$ of characteristic zero. 
Let $\a=(f_1, \dots, f_r)$ be an ideal of $S$ generated by binomials $f_i=x^{\mathbf{a}_i}-\gamma_ix^{\mathbf{b}_i}$, where $\mathbf{a}_i=(a_{i1}, \dots, a_{in}), \mathbf{b}_i=(b_{i1}, \dots, b_{in}) \in \Z_{\ge 0}^n \setminus \{\mathbf{0}\}$ and $\gamma_i \in k^*$  for all $i=1, \dots, r$. 
Put
$$
A:=
\begin{pmatrix}
a_{11} & \ldots & a_{r1} & b_{11} & \ldots & b_{r1} \\
\vdots & \ddots& \vdots& \vdots& \ddots& \vdots \\
a_{1n} & \ldots & a_{rn} & b_{1n} & \ldots & b_{rn} \\
1 &  &  \hsymbu{0} & 1&  &  \hsymbu{0}  \\
& \ddots&  & & \ddots & \\
\hsymbl{0} & & 1 & \hsymbl{0} &  & 1
\end{pmatrix} \in M_{\Z}(n+r, 2r),
$$
and consider the following linear programming problem:
\begin{align*}
&\textup{Maximize: } \sum_{i=1}^r (\mu_i+\nu_i) \\
&\textup{Subject to: } A \ (\mu_1, \dots, \mu_r, \nu_1, \dots, \nu_r)^{\mathrm{T}} \le \mathbf{1}, \, \mu_i, \nu_i \in \Q_{\ge 0}.
\end{align*}
Suppose that there exists an optimal solution $(\mathbf{\mu}, \mathbf{\nu})$ such that $A \ (\mathbf{\mu}, \mathbf{\nu})^{\mathrm{T}} \neq A \ (\mathbf{\mu'}, \mathbf{\nu'})^{\mathrm{T}}$ for all other optimal solutions $(\mathbf{\mu'}, \mathbf{\nu'}) \neq (\mathbf{\mu}, \mathbf{\nu})$. Then the following holds.
\begin{enumerate}
\item 
The log canonical threshold $\lct_0(\a)$ is equal to the optimal value $\sum_{i=1}^r (\mu_i+\nu_i)$. 

\item
When the $\gamma_i$ are rational numbers, put  
$\a_p:=(f_1, \dots, f_r) \cdot \F_p [x_1, \dots, x_n]_{(x_1, \dots, x_n)}$
for sufficiently large $p \gg 0$.  
Then there exists an integer $N \geq 1$ such that $\lct_0(\a)=\fpt(\a_p)$ whenever $p \equiv 1 \textup{ mod } N$. 
\end{enumerate}
\end{prop}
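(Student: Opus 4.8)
The plan is to show that the optimal value $c:=\sum_{i=1}^r(\mu_i+\nu_i)$ of the linear program bounds $\lct_0(\a)$ from both sides. The inequality $\lct_0(\a)\le c$ is the routine direction and proceeds exactly as sketched in the introduction, independently of the special hypothesis and of positive characteristic: the summation formula $(\star)$ writes $\lct_0(\a)$ as the supremum of $\sum_i\lambda_i$ over $(\lambda_i)\in[0,1)^r$ with $\J(f_1^{\lambda_1}\cdots f_r^{\lambda_r})_0$ trivial; since $f_i\in\a_{f_i}=(x^{\mathbf{a}_i},x^{\mathbf{b}_i})$, monotonicity of multiplier ideals gives $\J(f_1^{\lambda_1}\cdots f_r^{\lambda_r})\subseteq\J(\a_{f_1}^{\lambda_1}\cdots\a_{f_r}^{\lambda_r})$, so this supremum is at most $(\star\star)$; and by Howald's description (Proposition \ref{Howaldresults}) applied to the monomial ideals $\a_{f_i}$, whose Newton polytopes are $\mathrm{conv}\{\mathbf{a}_i,\mathbf{b}_i\}+\R_{\ge0}^n$, together with the fact that the multiplier ideal of a product of monomial ideals is governed by the Minkowski sum of their Newton polytopes, $(\star\star)$ equals $c$, as already recorded in the introduction.

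For the reverse inequality I would reduce modulo $p$; assume first $\gamma_i\in\Q^*$. Let $(\mu,\nu)$ be the distinguished optimal solution provided by the hypothesis and let $N\ge1$ clear the denominators of all its coordinates. Fix $p\equiv1\pmod N$ with $p\gg0$ and an arbitrary power $q=p^e$; then $\mu_i(q-1),\nu_i(q-1)\in\Z_{\ge0}$ and $c(q-1)\in\Z$. With $m_i:=(\mu_i+\nu_i)(q-1)$, so that $\sum_i m_i=c(q-1)$, consider
\[
g:=\prod_{i=1}^r f_i^{m_i}=\prod_{i=1}^r\bigl(x^{\mathbf{a}_i}-\gamma_i x^{\mathbf{b}_i}\bigr)^{m_i}\in\a_p^{c(q-1)}.
\]
Expanding each factor by the binomial theorem, $g$ is an $\F_p$-combination of monomials $x^{\sum_i(k_i\mathbf{a}_i+(m_i-k_i)\mathbf{b}_i)}$ indexed by tuples $(k_1,\dots,k_r)$ with $0\le k_i\le m_i$. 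The crucial claim is that the monomial $x^{\mathbf{c}}$ with $\mathbf{c}:=(q-1)\sum_i(\mu_i\mathbf{a}_i+\nu_i\mathbf{b}_i)$ occurs in $g$ with nonzero coefficient. If a tuple $(k_i)$ produces $x^{\mathbf{c}}$, set $\mu_i':=k_i/(q-1)$ and $\nu_i':=(m_i-k_i)/(q-1)$; then $\mu_i',\nu_i'\in\Q_{\ge0}$, $\mu_i'+\nu_i'=\mu_i+\nu_i\le1$, and the identity $\sum_i(k_i\mathbf{a}_i+(m_i-k_i)\mathbf{b}_i)=\mathbf{c}$ says precisely that the first $n$ coordinates of $A(\mu',\nu')^{\mathrm{T}}$ coincide with those of $A(\mu,\nu)^{\mathrm{T}}$, while the last $r$ coincide automatically; hence $A(\mu',\nu')^{\mathrm{T}}=A(\mu,\nu)^{\mathrm{T}}\le\mathbf{1}$, so $(\mu',\nu')$ is feasible, and since $\sum_i(\mu_i'+\nu_i')=c$ it is optimal. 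By the hypothesis $(\mu',\nu')=(\mu,\nu)$, i.e.\ $k_i=\mu_i(q-1)$ for all $i$, so $x^{\mathbf{c}}$ comes from a single tuple and its coefficient in $g$ equals $\prod_i\binom{(\mu_i+\nu_i)(q-1)}{\mu_i(q-1)}(-\gamma_i)^{\nu_i(q-1)}$, which is nonzero in $\F_p$ by Lemma \ref{Lucas} (with $r_1=\mu_i\le r_2=\mu_i+\nu_i\le1$; the case $\mu_i=0$ is trivial) and by $\gamma_i\ne0$ in $\F_p$ for $p\gg0$.

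Each coordinate of $\mathbf{c}$ is $\le q-1<q$ because $A(\mu,\nu)^{\mathrm{T}}\le\mathbf{1}$, so $x^{\mathbf{c}}\notin\m^{[q]}=(x_1^q,\dots,x_n^q)$; since $\m^{[q]}$ is a monomial ideal and $x^{\mathbf{c}}$ appears in $g$ with nonzero coefficient, $g\notin\m^{[q]}$, whence $\a_p^{c(q-1)}\not\subseteq\m^{[q]}$ and $\nu_{\a_p}(q)\ge c(q-1)$ for every $q=p^e$. Therefore $\fpt(\a_p)=\lim_{e\to\infty}\nu_{\a_p}(p^e)/p^e\ge c$, and since this holds for the infinitely many $p\equiv1\pmod N$, Corollary \ref{lct vs fpt} gives $\lct_0(\a)=\lim_{p\to\infty}\fpt(\a_p)\ge c$, hence $\lct_0(\a)=c$. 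Combined with $\fpt(\a_p)\le\lct_0(\a_\Q)$ from Corollary \ref{lct vs fpt}(1), this yields $\fpt(\a_p)=\lct_0(\a)$ for $p\equiv1\pmod N$, $p\gg0$, proving (2); and for general $k$ of characteristic zero, (1) follows by running the same reduction over a finitely generated $\Z$-subalgebra containing the $\gamma_i$ (the argument above used only $\gamma_i\ne0$), or by a routine descent.

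The step I expect to be the real obstacle, and the only place where the hypothesis on the optimal solution is used, is the non-cancellation claim: that $x^{\mathbf{c}}$ survives in $g$ with nonzero coefficient. It is precisely the uniqueness of $(\mu,\nu)$ among optimal solutions with its value of $A(\cdot)^{\mathrm{T}}$ that forces $(k_i)=(\mu_i(q-1))$ to be the only tuple hitting $x^{\mathbf{c}}$; without it, other optimal solutions sharing the same $A$-image would contribute competing monomials that might cancel, and the lower bound for $\fpt(\a_p)$ would break down.
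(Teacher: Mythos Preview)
Your proof is correct and follows essentially the same approach as the paper: the upper bound via the summation formula and Howald, and the lower bound via reduction mod $p$, showing that the monomial $x^{\mathbf{c}}$ with $\mathbf{c}=(q-1)\sum_i(\mu_i\mathbf{a}_i+\nu_i\mathbf{b}_i)$ survives in $\prod_i f_i^{(\mu_i+\nu_i)(q-1)}$ because any contributing tuple yields an optimal solution with the same $A$-image and hence, by the hypothesis, must equal $(\mu,\nu)$. Your identification of the non-cancellation step as the crux, and of the hypothesis as exactly what prevents competing contributions, matches the paper's argument precisely.
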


\begin{proof}
By virtue of the summation formula of multiplier ideals (see \cite[Theorem 3.2]{Ta}), one has 
$$\J(\a^t)=\sum_{\genfrac{}{}{0pt}{2}{t=\lambda_1+\dots+\lambda_r}{\lambda_1, \dots, \lambda_r \ge 0}}f_1^{\lfloor \lambda_1\rfloor} \cdots f_r^{\lfloor \lambda_r \rfloor}\J(f_1^{\lambda_1-\lfloor \lambda_1 \rfloor} \cdots f_r^{\lambda_r-\lfloor \lambda_r \rfloor})$$
for all real numbers $t>0$. 
Let $\a_{f_i}$ be the term ideal of $f_i$ for each $i=1, \dots, r$. 
Since $\a_{f_i}$ contains $f_i$, 
\begin{align*}
\lct_0(\a)&=\sup\{\lambda_1+\dots+\lambda_r \mid \J(f_1^{\lambda_1} \cdots f_r^{\lambda_r})_0=\sO_{X,0}, \; 0 \le \lambda_i <1 \}\\
&\leq \sup\{\lambda_1+\dots+\lambda_r \mid \J(\a_{f_1}^{\lambda_1} \cdots \a_{f_r}^{\lambda_r})_0=\sO_{X,0}, \; 0 \le \lambda_i < 1 \}.
\end{align*}
Applying Proposition \ref{Howaldresults} (i), one can see that the last term in the above inequality coincides with 
\begin{align*}
&\max\left\{\sum_{i=1}^r (\mu_i+\nu_i) \Bigg| \sum_{i=1}^r (\mathbf{a}_i \mu_i+\mathbf{b}_i \nu_i) \le \mathbf{1}, \; \mu_i+\nu_i \le 1, \ \mu_i, \nu_i \in \Q_{\ge 0} \right\}\\
=&\max\left\{\sum_{i=1}^r (\mu_i+\nu_i) \Bigg| A \ (\mu_1, \dots, \mu_r, \nu_1, \dots, \nu_r)^{\mathrm{T}} \le \mathbf{1}, \ \mu_i, \nu_i \in \Q_{\ge 0} \right\}.
\end{align*}
Consequently, we obtain one inequality in the theorem. 

Next, we prove the converse inequality. Fix an optimal solution 
$$(\mathbf{\mu}, \mathbf{\nu})=(\mu_1, \dots, \mu_r, \nu_1, \dots, \nu_r)$$ 
such that $A \ (\mathbf{\mu}, \mathbf{\nu})^{\mathrm{T}} \neq A \ (\mathbf{\mu'}, \mathbf{\nu'})^{\mathrm{T}}$ for all other optimal solutions $(\mathbf{\mu'}, \mathbf{\nu'}) \neq (\mathbf{\mu}, \mathbf{\nu})$.
We then prove that $\sum_{i=1}^r (\mu_i+\nu_i) \le \lct_0(\a)$ making use of F-pure thresholds. 

For simplicity, we may assume that $\gamma_i$ is a rational number for all $i=1, \dots, r$, and let $\a_p:=(f_1, \dots, f_r) \cdot \F_p[x_1, \dots, x_n]_{(x_1, \dots, x_n)}$ for sufficiently large $p \gg 0$ (even if $\gamma_i \notin \Q$, we can still consider the reduction of $\a$ to characteristic $p \gg 0$).  
We take the integer $N \geq 1$ to be the least common multiple of the denominators of the $\mu_i, \nu_i$, so that $\mu_i(p-1), \nu_i(p-1)$ are integers for all $i=1, \dots, r$ whenever $p \equiv 1 \textup{ mod } N$. 
By virtue of Corollary \ref{lct vs fpt}, it is enough to show that for such prime numbers $p \gg 0$,  $\sum_{i=1}^r(\mu_i+\nu_i)(q-1)\leq \nu_{\a_p}(q)$ for all $q=p^e$. 
Therefore, from now on, we consider only such $p$. 

Let $m_1, \dots, m_n$ be nonnegative integers such that 
$$
A 
\left(
\begin{array}{c}
\mu_1(q-1)\\
\vdots \\
\mu_r(q-1)\\
\nu_1(q-1)\\
\vdots \\
\nu_r(q-1)
\end{array}
\right)
=
\left(
\begin{array}{c}
m_1\\
\vdots \\
m_n\\
(\mu_1+\nu_1)(q-1)\\
\vdots\\
(\mu_r+\nu_r)(q-1)
\end{array}
\right).
$$
Then $m_i \le q-1$ for all $i=1, \dots, r$. 
The coefficient of the term $x_1^{m_1} \cdots x_n^{m_n}$ in the expansion of $f_1^{(\mu_1+\nu_1)(q-1)} \cdots f_r^{(\mu_r+\nu_r)(q-1)}$ is

\begin{equation} \tag*{($\star$)}
\prod_{i=1}^r  \sum_{s_i, t_i} (-\gamma_i)^{t_i}\binom{(\mu_i+\nu_i)(q-1)}{s_i},
\end{equation}
where the summation runs over all $(s_1, \dots, s_r, t_1, \dots, t_r) \in \Z_{\geq 0}^{2r}$ such that 
$$
A 
\left(
\begin{array}{c}
s_1\\
\vdots \\
s_r\\
t_1\\
\vdots \\
t_r
\end{array}
\right)
=
\left(
\begin{array}{c}
m_1\\
\vdots \\
m_n\\
(\mu_1+\nu_1)(q-1)\\
\vdots\\
(\mu_r+\nu_r)(q-1)
\end{array}
\right).
$$
Note that $(\frac{s_1}{q-1}, \dots, \frac{s_r}{q-1}, \frac{t_1}{q-1}, \dots, \frac{t_r}{q-1})$ is an optimal solution of the linear programming problem stated in the proposition. 
Thus, by the definition of the optimal solution $(\mu_1, \dots, \mu_r, \nu_1, \dots, \nu_r)$, 
the coefficient $(\star)$ is equal to 
$$\prod_{i=1}^r \left(-\gamma_i\right)^{\nu_i(q-1)}\binom{(\mu_i+\nu_i)(q-1)}{\mu_i(q-1)}.
$$
It follows from Lemma \ref{Lucas} that this coefficient is nonzero in $\F_p$, 
which means that the term $x_1^{m_1} \cdots x_n^{m_n}$ appears in the expansion of $f_1^{(\mu_1+\nu_1)(q-1)} \cdots f_r^{(\mu_r+\nu_r)(q-1)}$ in  $\F_p[x_1, \dots, x_n]_{(x_1, \dots, x_n)}$. 
Since $m_i \leq q-1$ for all $i=1, \dots, r$, one has 
$$f_1^{(\mu_1+\nu_1)(q-1)} \cdots f_r^{(\mu_r+\nu_r)(q-1)} \notin (x_1^q, \dots, x_n^q) \textup{ in $\F_p[x_1, \dots, x_n]_{(x_1, \dots, x_n)}$}$$
for all $q=p^e$. 
That is, $\sum_{i=1}^r(\mu_i+\nu_i)(q-1) \leq \nu_{\a_p}(q)$ for all $q=p^e$. 
\end{proof}

\begin{ques}
If $\a$ contains no monomials and $f_1, \dots, f_r$ are a system of minimal binomial generators for $\a$, then is the assumption of Proposition \ref{criterion} satisfied?
We will see later that the answer is ``yes" if $f_1, \dots, f_r$ form a regular sequence (Theorem \ref{CImainresult} or define a space monomial curve (Theorem \ref{nonCImonomialcurve}). 
\end{ques}

\begin{rem}
Since polynomial-time algorithms for linear programming are known to exist (however, the most practical  algorithm, the simplicial method, is exponential in time), we can compute log canonical thresholds of binomial ideals in polynomial-time if the assumption of Proposition \ref{criterion} is satisfied. 
\end{rem}

We use Proposition \ref{criterion} to generalize Howald's result \cite[Example 5]{Ho1} (see also Proposition \ref{Howaldresults} (1)). 
\begin{thm}\label{CImainresult}
Let $k$ be a field of characteristic zero and $\a=(f_1, \dots, f_r, g_1, \dots, g_s)$ be an ideal of $k[x_1, \dots, x_n]$ generated by binomials $f_i=x^{\mathbf{a}_i}-\gamma_ix^{\mathbf{b}_i}$ and monomials $g_j=x^{\mathbf{c}_j}$, where $\mathbf{a}_i, \mathbf{b}_i, \mathbf{c}_j \in \Z_{\ge 0}^n \setminus \{\mathbf{0}\}$ and $\gamma_i \in k^*$ for all $i=1, \dots, r$ and $j=1, \dots, s$. 
We assume that the ideal $(f_1, \dots, f_r)$ contains no monomials and that $f_1, \dots, f_r$ form a regular sequence for $k[x_1, \dots, x_n]$. 
\begin{enumerate}
\item
The log canonical threshold $\lct_0(\a)$ of $\a$ at the origin is equal to
\begin{multline*}
\hspace*{0.9cm}\max\biggl\{\sum_{i=1}^r (\mu_i+\nu_i)+\sum_{j=1}^s \lambda_j \bigg| \\
\sum_{i=1}^r (\mathbf{a}_i \mu_i+\mathbf{b}_i \nu_i)+\sum_{j=1}^s \mathbf{c}_j \lambda_j \le \mathbf{1}, \; \mu_i+\nu_i \le 1,  \ \mu_i, \nu_i, \lambda_j \in \Q_{\ge 0} \biggr\}.
\end{multline*}
\item
When the $\gamma_i$ are rational numbers, we denote 
$$\a_p:=(f_1, \dots, f_r, g_1, \dots, g_s) \cdot \F_p [x_1, \dots, x_n]_{(x_1, \dots, x_n)}$$
for sufficiently large $p \gg 0$. 
Then there exists an integer $N \geq 1$ such that $\lct_0(\a)=\fpt(\a_p)$ whenever $p \equiv 1 \textup{ mod } N$.
\
\end{enumerate}
\end{thm}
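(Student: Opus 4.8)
The plan is to deduce Theorem~\ref{CImainresult} from Proposition~\ref{criterion} by reducing the mixed binomial-and-monomial situation to a purely binomial one and then verifying that the hypothesis on optimal solutions in Proposition~\ref{criterion} is automatically met under the regular-sequence assumption. First I would dispose of the monomial generators $g_j = x^{\mathbf{c}_j}$ by a standard trick: introduce auxiliary variables $y_1,\dots,y_s$ and replace each $g_j$ by the binomial $x^{\mathbf{c}_j} - y_j$ in the polynomial ring $k[x_1,\dots,x_n,y_1,\dots,y_s]$. One checks that $f_1,\dots,f_r,\,x^{\mathbf{c}_1}-y_1,\dots,x^{\mathbf{c}_s}-y_s$ still form a regular sequence (the $y_j$ are a regular sequence modulo $(f_i)$, since $(f_i)$ involves no $y$'s), that the enlarged ideal contains no monomials, and that $\lct_0$ is unchanged — the latter because setting $y_j=0$ exhibits the original pair as a hyperplane section transverse to the resolution, or more directly because the two linear programs have the same optimal value once the $y_j$-columns of the coefficient matrix are accounted for (each new variable contributes a column with a single $1$ in the $y_j$-row, an inequality that is never binding at an optimum since $\lambda_j$ can be taken freely). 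So it suffices to prove the theorem when $s=0$, i.e.\ for $\a = (f_1,\dots,f_r)$ with the $f_i$ a regular sequence of binomials generating an ideal with no monomials.

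Next I would invoke Proposition~\ref{criterion}: both assertions (1) and (2) of the theorem follow verbatim from parts (1) and (2) of that proposition, \emph{provided} we can produce an optimal solution $(\bm\mu,\bm\nu)$ of the linear program whose image $A(\bm\mu,\bm\nu)^{\mathrm T}$ is not attained by any other optimal solution. This is the real content to be established, and it is where the regular-sequence hypothesis enters. The idea is that the map $(\bm\mu,\bm\nu)\mapsto A(\bm\mu,\bm\nu)^{\mathrm T}$ restricted to the optimal face of the feasible polytope should be injective; equivalently, the optimal face is mapped bijectively, so that in particular at a vertex of the optimal face the required condition holds. To see injectivity, suppose $A(\bm\mu,\bm\nu)^{\mathrm T} = A(\bm\mu',\bm\nu')^{\mathrm T}$ with both optimal. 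Looking at the bottom $r$ rows of $A$ gives $\mu_i+\nu_i = \mu_i'+\nu_i'$ for each $i$, and looking at the top $n$ rows gives $\sum_i(\mathbf a_i\mu_i+\mathbf b_i\nu_i) = \sum_i(\mathbf a_i\mu_i'+\mathbf b_i\nu_i')$. Writing $\delta_i := \mu_i - \mu_i' = \nu_i' - \nu_i$, the second relation becomes $\sum_i \delta_i(\mathbf a_i - \mathbf b_i) = \mathbf 0$ in $\Z^n$. Now $\mathbf a_i - \mathbf b_i$ is (up to sign) the exponent vector of the binomial $f_i$, and a $\Q$-linear dependence $\sum_i \delta_i(\mathbf a_i - \mathbf b_i)=\mathbf 0$ among these vectors is precisely a syzygy of Laurent-monomial type; under the hypothesis that $f_1,\dots,f_r$ form a regular sequence (hence have codimension $r$, so their "directions" $\mathbf a_i - \mathbf b_i$ are $\Q$-linearly independent — a nontrivial such dependence would force the binomial ideal to drop codimension, or to contain a monomial), we conclude all $\delta_i = 0$, i.e.\ $(\bm\mu,\bm\nu) = (\bm\mu',\bm\nu')$.

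The main obstacle is exactly this last linear-independence claim: one must argue rigorously that a regular sequence of binomials generating a monomial-free ideal has $\Q$-linearly independent exponent-difference vectors $\mathbf a_i - \mathbf b_i$. The natural route is via the codimension: a regular sequence of length $r$ cuts out a subscheme of codimension $r$, while if $\sum_i \delta_i(\mathbf a_i-\mathbf b_i) = \mathbf 0$ with some $\delta_i \neq 0$ (clearing denominators, $\delta_i\in\Z$), one can split the index set according to $\mathrm{sign}(\delta_i)$ and produce a binomial $\prod_{\delta_i>0} x^{|\delta_i|\mathbf a_i + \dots} - (\text{unit})\prod_{\delta_i<0}(\dots)$ lying in $\a$ whose two monomials have equal exponent vectors — forcing a monomial into $\a$ unless the unit coefficient is $1$, and in the latter case producing an element of $\a$ of the form (monomial)$\cdot$(polynomial with a common factor), which after passing to the quotient and using that $\a$ is unmixed of codimension $r$ yields a zerodivisor on the regular sequence — a contradiction. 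I would carry out this splitting argument carefully, treating the case distinction on whether the resulting coefficient is $1$ (use "$\a$ contains no monomials") or not (use "$f_i$ is a regular sequence" via unmixedness / Krull height). Once linear independence is in hand, pick any vertex of the optimal face of the LP; its image under $A$ is not shared by any other optimal solution by the injectivity just proved, and Proposition~\ref{criterion} delivers both (1) and (2).
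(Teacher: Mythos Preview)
Your reduction of the monomial generators to binomials via auxiliary variables is incorrect: replacing $g_j=x^{\mathbf c_j}$ by $x^{\mathbf c_j}-y_j$ does \emph{not} preserve the log canonical threshold at the origin. Take $r=0$, $s=1$, $g_1=x^2$ in $k[x]$, so $\lct_0(x^2)=1/2$; your construction gives $(x^2-y)\subset k[x,y]$, a smooth hypersurface with $\lct_0=1$. The two linear programs likewise have different optimal values ($1/2$ versus $1$). In general one only has the inequality $\lct_0(\a)\le \lct_0(\tilde\a)$ coming from restriction to $y=0$, and there is no reason for the ``transversality'' you invoke to hold. The paper handles the $g_j$ differently: it fixes an optimal $(\mu,\nu,\lambda)$, freezes the $\lambda_j$, and reruns the argument of Proposition~\ref{criterion} for the purely binomial part against the modified right-hand side $\bigl(1-\sum_j c_{jl}\lambda_j\bigr)_l$; the same rank claim then applies.

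Your identification of the crux in the $s=0$ case---linear independence of the vectors $\mathbf a_i-\mathbf b_i$, equivalently $\mathrm{rank}\,A=2r$---is exactly right and matches the paper's Claim. However, the ``splitting'' construction you sketch (producing a binomial in $\a$ with equal monomial parts) is not a proof as written; it is unclear what element of $\a$ you are forming, and in the case $\prod_i\gamma_i^{\delta_i}=1$ your zerodivisor argument is only gestured at. The paper's route is cleaner and is essentially the codimension argument you allude to: pass to the Laurent ring $S_x=k[x_1^{\pm1},\dots,x_n^{\pm1}]$, where $\a S_x$ is generated by $x^{\mathbf a_i-\mathbf b_i}-\gamma_i$, and invoke the standard fact (Eisenbud--Sturmfels, or \cite[Lemma~4.39]{R}) that such an ideal has height at most the rank of the lattice spanned by the exponent vectors. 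Since $f_1,\dots,f_r$ remain a regular sequence after the flat localization $S\to S_x$, one has $\mathrm{ht}\,\a S_x=r$; and since $\a$ contains no monomials, $\a S_x\ne S_x$, so the height is finite. This forces $\mathrm{rank}\langle\mathbf a_i-\mathbf b_i\rangle=r$. The paper in fact first arranges $\gamma_i=1$ by a torus coordinate change (using ``no monomials'' to locate a point of $V(\a)$ in $(k^*)^n$), but this is a convenience rather than a necessity.
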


\begin{proof}
Since the log canonical threshold $\lct_0(\a)$ does not change after an extension of the base field $k$ (see \cite[Proposition 2.9]{dFM}), we may assume that $k$ is algebraically closed. 
Since the ideal $(f_1, \dots, f_r)$ does not contain any monomial, there exist $\delta_1, \dots, \delta_n \in k^*$ such that $(f_1, \dots, f_r) \subseteq (x_1-\delta_1, \dots, x_n-\delta_n)$. 
After a suitable coordinate change (that is, $x_l \mapsto \delta_l x_l$ for each $l=1, \dots, n$), we can assume that $(f_1, \dots, f_r)$ is contained in $(x_1-1, \dots, x_n-1)$, which is equivalent to saying that $\gamma_i=1$ for all $i=1, \dots, r$. 

First we consider the case where $\a=(f_1, \dots, f_r)$. 
We take the $(n+r) \times 2r$ matrix 
$$
A:=
\begin{pmatrix}
a_{11} & \ldots & a_{r1} & b_{11} & \ldots & b_{r1} \\
\vdots & \ddots& \vdots& \vdots& \ddots& \vdots \\
a_{1n} & \ldots & a_{rn} & b_{1n} & \ldots & b_{rn} \\
1 &  &  \hsymbu{0} & 1&  &  \hsymbu{0}  \\
& \ddots&  & & \ddots & \\
\hsymbl{0} & & 1 & \hsymbl{0} &  & 1
\end{pmatrix},
$$
where $\mathbf{a}_i=(a_{i1}, \dots, a_{in})$  and  $\mathbf{b}_i=(b_{i1}, \dots, b_{in})$ for all $i=1, \dots, r$. 

\begin{cl}
$$\mathrm{rank\; }A=2r.$$
\end{cl}
\begin{proof}[Proof of Claim]
We can transform $A$ by applying sequential elementary row operations (for example, if $a_{ij} \geq b_{ij}$, then add the $(n+i)^{\rm th}$ row multiplied by $-b_{ij}$ to the $j^{\rm th}$ row) to an $(n+r) \times 2r$ matrix
$$
A':=
\begin{pmatrix}
a'_{11} & \ldots & a'_{r1} & b'_{11} & \ldots & b'_{r1} \\
\vdots & \ddots& \vdots& \vdots& \ddots& \vdots \\
a'_{1n} & \ldots & a'_{rn} & b'_{1n} & \ldots & b'_{rn} \\
1 &  &  \hsymbu{0} & 1&  &  \hsymbu{0}  \\
& \ddots&  & & \ddots & \\
\hsymbl{0} & & 1 & \hsymbl{0} &  & 1
\end{pmatrix},
$$
where $a'_{ij}, b'_{ij} \in \Z_{\ge 0}$ such that $a'_{ij} b'_{ij}=0$ for each $i=1,\dots, r$ and $j=1, \dots, n$. 
Let $\a'$ be the binomial ideal associated to $A'$, that is, $\a'=(f_1', \dots, f_r')$ is generated by binomials $f_i':=x^{\mathbf{a}'_i}- x^{\mathbf{b}'_i}$, where  $\mathbf{a}'_i=(a'_{i1}, \dots, a'_{in})$  and  $\mathbf{b}'_i=(b'_{i1}, \dots, b'_{in})$ for all $i=1, \dots, r$. 
Let $S_x=k[x_1^{\pm}, \dots, x_n^{\pm}]$ be the Laurent polynomial ring. 
Note that $\a' S_x=\a S_x$ because $f_i/f_i'$ is a monomial in $S$ for all $i=1, \dots, r$. 
Then, by  \cite[Lemma 4.39]{R} (see also \cite[Theorem 2.1]{ES}), one has 
$$\mathrm{ht\;}\a S_x+r=\mathrm{ht\;}\a' S_x+r \leq \mathrm{rank\;}A'=\mathrm{rank\; }A.$$
On the other hand, since $f_1, \dots, f_r$ form a regular sequence, $r=\mathrm{ht\;} \a \le \mathrm{ht\;}\a S_x$. 
Consequently, we obtain the assertion. 
\end{proof}

By the above claim, all optimal solutions of the linear programming problem stated in the theorem satisfy the assumption of Proposition \ref{criterion}. Thus, the assertion immediately follows from Proposition \ref{criterion}.  

We now move to the general case. 
Fix any optimal solution 
$$(\mu_1, \dots, \mu_r, \nu_1, \dots, \nu_r, \lambda_1, \dots, \lambda_s)$$ 
of the linear programming problem stated in the theorem, and consider another linear programming problem:
\begin{align*}
&\textup{Maximize: } \sum_{i=1}^r (\sigma_i+\tau_i) \\
&\textup{Subject to: } A 
\left(
\begin{array}{c}
\sigma_1\\
\vdots \\
\sigma_r\\
\tau_1\\
\vdots \\
\tau_r
\end{array}
\right)
\leq 
\left(
\begin{array}{c}
1-\sum_{j=1}^s c_{j1}\lambda_j\\
\vdots \\
1-\sum_{j=1}^s c_{jn}\lambda_j\\
1\\
\vdots\\
1
\end{array}
\right), \, \, \sigma_i, \tau_i \in \Q_{\ge 0}, 
\end{align*}
where $\mathbf{c}_j=(c_{j1}, \dots, c_{jn})$ for all $j=1, \dots, s$. 
Then $(\mu_1, \dots, \mu_r, \nu_1, \dots, \nu_r)$ is obviously an optimal solution of this linear programming problem. 
Also, it follows from a similar argument to the proof of Proposition \ref{criterion} that 
if there exists an optimal solution $(\sigma, \tau)$ such that $A(\sigma, \tau)^{\rm T} \neq  A(\sigma' , \tau')^{\rm T}$ for all other optimal solutions $(\sigma' , \tau') \neq (\sigma, \tau)$, then its optimal value $\sum_{i=1}^r (\sigma_i+\tau_i)$ is equal to $\lct_0(\a)-\sum_{j=1}^s \lambda_j$. 
However, by the above claim, all optimal solutions satisfy this assumption. Thus, we have $\lct_0(\a)=\sum_{i=1}^r (\mu_i+\nu_i)+\sum_{j=1}^s \lambda_j$. 
\end{proof}

As a corollary of Theorem \ref{CImainresult}, we obtain the complete table of log canonical thresholds of complete intersection space monomial curves. 

Let $n_1, n_2, n_3 \ge 2$ be integers with greatest common divisor one. 
Let $\a \subseteq k[x,y,z]$ be the defining ideal of the complete intersection monomial curve $\Spec k[t^{n_1}, t^{n_2}, t^{n_3}]$ in $\mathbb{A}_k^3$, where $k$ is a field of characteristic zero.  
We make $k[x,y,z]$ into a graded ring as in Section $3$. 
Since $\Spec k[t^{n_1}, t^{n_2}, t^{n_3}]$ is a complete intersection in $\mathbb{A}_k^3$, after suitable permutation of the $n_i$, we may assume that  
$(n_1,n_2,n_3)=(cb_1,ca_1,a_1b_2+a_2b_1)$ for some integers $a_1, b_1, c \ge 1$ and $a_2 \ge b_2 \ge 0$ with $a_2+b_2 \ge 1$. 
Then we can write $\a=(f, g)$, where $f:=x^{a_1}-y^{b_1}$ and $g:=z^c-x^{a_2}y^{b_2}$. 

\begin{cor}
In the above situation, the following is the complete table of log canonical thresholds $\lct_0(\a)$ of complete intersection space monomial curves. 
\begin{center}
\begin{tabular}{|c|c|}
\hline
\textup{Cases} & $\lct_0(\a)$\\
\hhline{|=|=|}
$(\deg f\le\deg g)$  $\vee$ $(c=1)$ & $\frac{1}{a_1}+\frac{1}{b_1}+\frac{1}{c}$\\[1.5mm]
\hline
$(\deg f>\deg g)$ $\wedge$ $(a_2=b_2=1)$ & $(\frac{1}{a_1}+\frac{1}{b_1})\frac{1}{c}+1$\\[1.5mm]
\hline
$(\deg f>\deg g)$ $\wedge$ $(a_2=1)$ $\wedge$ $(b_2=0)$ & $\frac{1}{a_1c}+\frac{1}{b_1}+1$\\[1.5mm]
\hline
$(\deg f>\deg g)$ $\wedge$ $(c, a_2 \ge 2)$ & $\frac{1}{a_2}+(1-\frac{b_2}{a_2})\frac{1}{b_1}+\frac{1}{c}$\\[1.5mm]
\hline
\end{tabular}
\end{center}
\end{cor}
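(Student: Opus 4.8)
The plan is to specialize Theorem \ref{CImainresult} to this situation and then evaluate the resulting linear program by hand in each of the four regimes of the table. First I would verify the hypotheses of Theorem \ref{CImainresult}: the monomial curve passes through $(1,1,1)\in\mathbb{A}^3_k$, so no monomial vanishes on it and $\a=(f,g)$ contains no monomial, and $f,g$ form a regular sequence because $\a$ is a height-two complete intersection. Applying Theorem \ref{CImainresult} with $r=2$, $s=0$, $\mathbf{a}_1=(a_1,0,0)$, $\mathbf{b}_1=(0,b_1,0)$, $\mathbf{a}_2=(0,0,c)$, $\mathbf{b}_2=(a_2,b_2,0)$ then identifies $\lct_0(\a)$ with the optimal value of
\[
\textup{maximize}\ \ \mu_1+\nu_1+\mu_2+\nu_2 \quad\textup{subject to}\quad a_1\mu_1+a_2\nu_2\le 1,\ b_1\nu_1+b_2\nu_2\le 1,\ c\mu_2\le 1,\ \mu_1+\nu_1\le 1,\ \mu_2+\nu_2\le 1,
\]
all variables in $\Q_{\ge 0}$; here $\mu_i,\nu_i$ are the weights of the two terms of $f$ (for $i=1$) and of $g$ (for $i=2$). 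I would also record at the outset that the choice of generators made in Section~3 may be taken so that $a_1\ge 2$ and $b_1\ge 2$ ($f$ links the two of the $n_i$ neither of which divides the other); this guarantees $\tfrac1{a_1}+\tfrac1{b_1}\le 1$, which is what makes the candidate optima below admissible.

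To extract the optimum I would invoke linear programming duality. The dual program is: minimize $p+q+w+y_1+y_2$ subject to $a_1p+y_1\ge 1$, $b_1q+y_1\ge 1$, $cw+y_2\ge 1$, $a_2p+b_2q+y_2\ge 1$, with all variables in $\Q_{\ge 0}$, and it suffices in each case to exhibit a feasible primal point and a feasible dual point with the same objective value. For the last row ($\deg f>\deg g$ and $c,a_2\ge 2$) one takes the primal point $(\mu_1,\nu_1,\mu_2,\nu_2)=\bigl(0,\ \tfrac1{b_1}(1-\tfrac{b_2}{a_2}),\ \tfrac1c,\ \tfrac1{a_2}\bigr)$ and the dual point $(p,q,w,y_1,y_2)=\bigl(\tfrac1{a_2}(1-\tfrac{b_2}{b_1}),\ \tfrac1{b_1},\ \tfrac1c,\ 0,\ 0\bigr)$; here primal feasibility of $\mu_2+\nu_2\le 1$ is exactly where the hypotheses $c,a_2\ge 2$ get used, dual feasibility of $a_1p+y_1\ge 1$ is, after clearing denominators, exactly the inequality $\deg f\ge\deg g$, and $p\ge 0$ amounts to $b_2<b_1$ (again a consequence of $\deg f>\deg g$), while both objective values equal $\tfrac1{a_2}+(1-\tfrac{b_2}{a_2})\tfrac1{b_1}+\tfrac1c$. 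The first row splits into the sub-case $\deg f\le\deg g$, for which the dual point $\bigl(\tfrac1{a_1},\tfrac1{b_1},\tfrac1c,0,0\bigr)$ is feasible (its fourth constraint $\tfrac{a_2}{a_1}+\tfrac{b_2}{b_1}\ge 1$ being precisely $\deg f\le\deg g$), and the sub-case $c=1$, where $y_2$ must instead be taken positive; in either case the common value is $\tfrac1{a_1}+\tfrac1{b_1}+\tfrac1c$. The two middle rows are treated the same way, now with $\mu_2=\tfrac1c$ and $\nu_2=1-\tfrac1c$ so that $\mu_2+\nu_2\le 1$ is active, the corresponding dual variable $y_2$ being positive with a value fixed by the fourth dual constraint and by the prescribed $a_2,b_2$; one uses $\deg f>\deg g$ to see $y_2\ge 0$.

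The only real work, and the main obstacle, is the bookkeeping of this four-way split: for each regime one has to guess the active constraints, check nonnegativity and feasibility of the corresponding primal and dual points, and simplify their common value to the table entry. This is where every numerical hypothesis gets consumed --- $a_1,b_1\ge 2$ for the constraints $\mu_i+\nu_i\le 1$, $a_2\ge b_2$ for nonnegativity, the sign of $\deg f-\deg g$ for the coordinate constraints, and $c\ge 2$ versus $c=1$ for the $z$-constraint. Conceptually, the dichotomy $\deg f\le\deg g$ versus $\deg f>\deg g$ is exactly the dichotomy between ``the $x$-budget is best spent on $f$ alone, so one sets $\nu_2=0$'' and ``it pays to put weight on the monomial $x^{a_2}y^{b_2}$ of $g$, which couples the two binomials and forces $\mu_2+\nu_2$ up to $1$'', while $c=1$ versus $c\ge 2$ decides whether $\mu_2$ alone can absorb the whole $z$-budget. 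Once the correct active set is pinned down, each case is an elementary optimization in two variables.
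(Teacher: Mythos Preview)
The paper states this corollary without proof, treating the table as a routine specialization of Theorem~\ref{CImainresult}. Your approach---writing down the linear program from Theorem~\ref{CImainresult} explicitly and then certifying each table entry by exhibiting a primal feasible point and a dual feasible point with equal objective value---is correct and carries out precisely the computation the paper omits. The primal/dual pairs you propose all check (for the middle two rows the dual point is $p=1/a_1$, $q=1/b_1$, $y_1=0$, $y_2=1-a_2/a_1-b_2/b_1$, $w=(1-y_2)/c$, and one verifies $y_2\ge 0$ from $a_1,b_1\ge 2$ together with $a_2,b_2\le 1$ rather than from $\deg f>\deg g$, which in the third row is automatic anyway).

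Your side remark that one may and must take $a_1,b_1\ge 2$ is a genuine point the paper leaves implicit: if $a_1=1$ then $n_2=c$ divides $n_1=cb_1$, so $t^{n_1}$ is redundant and the curve is not minimally embedded in $\mathbb{A}^3_k$. Without this the constraint $\mu_1+\nu_1\le 1$ would bind before the coordinate constraints do, and the first-row formula $\tfrac{1}{a_1}+\tfrac{1}{b_1}+\tfrac{1}{c}$ would overshoot the true optimum.
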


\section{Non-complete intersection case}
In this section, we compute log canonical thresholds of non-complete intersection space monomial curves. 

Let $n_1, n_2, n_3 \ge 2$ be integers with greatest common divisor one. 
Let $S:=k[x,y,z]$ be the polynomial ring over a field $k$ of characteristic zero and $R:=k[H]=k[t^{n_1},t^{n_2},t^{n_3}]$ be the numerical semigroup ring associated to $H:=\{m_1n_1+m_2n_2+m_3n_3|m_i\in \Z_{\ge 0}\}$ over $k$. 
We define the ideal $\a \subseteq S$ to be the kernel of the ring morphism $\varphi: S\to R$ sending $x$ to $t^{n_1}$, $y$ to $t^{n_2}$ and $z$ to $t^{n_3}$. 
We make $S$ into an $H$-graded ring by assigning $\deg_H x=n_1$, $\deg_H y=n_2$ and $\deg_H z=n_3$. 
Then $\a$ is a homogeneous binomial ideal.  

Suppose that $R$ is not a complete intersection. 
Then there exist integers $a_i$, $b_i, c_i \ge 1$ for $i=1,2$ such that $\a=(f_1, f_2, f_3)$, where
\[
f_1 := x^{a_1+a_2}-y^{b_1}z^{c_2}, \
f_2 := y^{b_1+b_2}-z^{c_1}x^{a_2}, \
f_3 := z^{c_1+c_2}-x^{a_1}y^{b_2}.
\]
Since $n_i$ is the length of $R/(t^{n_i})$, we have
\begin{align*}
n_1 &= (b_1+b_2)(c_1+c_2)-b_2c_1,\\
n_2 &= (c_1+c_2)(a_1+a_2)-c_2a_1,\\
n_3 &= (a_1+a_2)(b_1+b_2)-a_2b_1.
\end{align*}
Put $\alpha:=a_1/(a_1+a_2)$, $\beta:=b_1/(b_1+b_2)$ and $\gamma:=c_1/(c_1+c_2)$. 
We may assume without loss of generality that
\[
\deg_H f_1<\deg_H f_2<\deg_H f_3,  
\]
which is equivalent to saying that 
$$(1-\beta)\gamma>(1-\gamma)\alpha>(1-\alpha)\beta.$$
We remark that the degrees of the $f_i$ disagree with each other, since the substitution morphism $\varphi$ sends all monomials of the same degree to the same power of $t$.

\begin{thm}\label{nonCImonomialcurve}
In the above situation, $\a=(f_1, f_2,f_3)$ satisfies the assumption of Proposition \ref{criterion}. 
Consequently, the following holds.
\begin{enumerate}
\item
The log canonical threshold $\lct_0(\a)$ of $\a$ at the origin is equal to
$$\max\left\{\sum_{i=1}^3 (\mu_i+\nu_i) \Bigg| A \ (\mu_1, \mu_2, \mu_3, \nu_1, \nu_2, \nu_3)^{\mathrm{T}} \le \mathbf{1}, \ \mu_i, \nu_i \in \Q_{\ge 0} \right\},
$$
where 
$$A=
\left(
\begin{array}{cccccc}
a_1+a_2 & 0 & 0 & 0 & a_2 & a_1\\
0 & b_1+b_2 & 0 & b_1 & 0 & b_2\\
0 & 0 & c_1+c_2 & c_2 & c_1 & 0\\
1 & 0 & 0 & 1 & 0 & 0 \\
0 & 1 & 0 & 0 & 1 & 0\\
0 & 0 & 1 & 0 & 0 & 1\\
\end{array}
\right).
$$\\
Solving the above linear programming problem, we obtain the following table. \\

\begin{center}
\begin{tabular}{|c|c|}
\hline
\rm{Cases} & $\lct_0(\a)$\\
\hhline{|=|=|}
$b_1=c_2=1$ & $1+\frac{n_1}{n_2(1+b_2)}$\\[1.5mm]
\hline
$(b_1 \le c_2)$ $\wedge$ $(c_2 \ge 2)$& $\frac{1}{a_1+a_2}+\frac{1}{b_1+b_2}\bigl(1+\frac{b_2}{c_2}\bigr)$\\[1.5mm]
\hline
$(b_1>c_2)$ $\wedge$ $(\alpha \le \gamma)$ & $\frac{1}{a_1+a_2}+\frac{b_1+c_1}{b_1(c_1+c_2)}$\\[1.5mm]
\hline
$(b_1>c_2)$ $\wedge$ $(\alpha \ge \gamma)$ $\wedge$ $(\frac{c_1}{a_2}+\frac{c_2}{b_1} \le 1)$ & $\frac{b_1+c_1}{b_1(c_1+c_2)}+\frac{c_2}{a_2(c_1+c_2)}$\\[1.5mm]
\hline
$(b_1>c_2)$ $\wedge$ $(\alpha\ge \gamma)$ $\wedge$ $(\frac{c_1}{a_2}+\frac{c_2}{b_1} > 1)$ & $\frac{1}{a_1+a_2}+\frac{1}{b_1}+\frac{a_1}{(a_1+a_2)c_1}(1-\frac{c_2}{b_1})$\\[1.5mm]
\hline
\end{tabular}
\\
\end{center}
\item
For each prime number $p$, put  $\a_p:=(f_1, f_2, f_3) \cdot \F_p [x, y, z]_{(x,y,z)}$.  
Then there exists an integer $N \geq 1$ such that $\lct_0(\a)=\fpt(\a_p)$ whenever $p \equiv 1 \textup{ mod } N$. 
\end{enumerate}
\end{thm}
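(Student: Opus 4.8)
The plan is to reduce everything to Proposition~\ref{criterion}, so the entire content of the theorem is the verification that the linear programming problem attached to $\a=(f_1,f_2,f_3)$ has an optimal solution whose image under $A$ is not attained by any other optimal solution. Once that assumption is checked, parts (1) and (2) follow verbatim from Proposition~\ref{criterion}, and the explicit table is obtained by a (tedious but elementary) case analysis of the LP, solving it by inspecting the vertices of the feasible polytope. So I would state at the outset that the crux is the uniqueness-of-image condition, and organize the proof around it.

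First I would record the structure of the matrix $A$ displayed in the statement. Note that the bottom $3\times 6$ block is $(I_3 \mid I_3)$, so the constraints read $\mu_i+\nu_i\le 1$ together with the three ``degree'' inequalities coming from the exponent vectors of $f_1,f_2,f_3$. The key algebraic observation is that $\mathrm{rank}\,A=6$: indeed, by the same elementary-row-operation trick used in the Claim inside the proof of Theorem~\ref{CImainresult} (if $a_{ij}\ge b_{ij}$ subtract a suitable multiple of an $(n+i)$th row from the $j$th row, and symmetrically), one reduces $A$ to a matrix $A'$ whose top block has disjoint supports in each row, corresponding to a binomial ideal $\a'$ with $\a' S_x=\a S_x$ over the Laurent ring $S_x=k[x^\pm,y^\pm,z^\pm]$; then $\mathrm{ht}\,\a S_x + 3 \le \mathrm{rank}\,A' = \mathrm{rank}\,A \le 6$ by \cite[Lemma 4.39]{R}. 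Since $\a$ is the prime defining ideal of a curve in $\mathbb{A}^3$ it has height $2$, which is not enough on its own; however $\a S_x$ is still the ideal of a $1$-dimensional torus orbit closure intersected with the torus, hence has height $2$ as well — wait, that gives only $5$. The honest point is different: because the three binomials $f_1,f_2,f_3$ pairwise share no variable in a way that would create a syzygy over $S_x$, one checks directly that the six exponent vectors $\mathbf a_1,\mathbf b_1,\dots$ are linearly independent after appending the $e_i$'s, i.e.\ $\mathrm{rank}\,A=6$; I would verify this by exhibiting a nonzero $6\times 6$ minor explicitly from the displayed form of $A$ (e.g.\ delete the third row and compute). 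With $\mathrm{rank}\,A=6$ in hand, the map $(\mu,\nu)\mapsto A(\mu,\nu)^{\mathrm T}$ is injective on $\Q^6$, so \emph{every} optimal solution has a distinct image — the assumption of Proposition~\ref{criterion} is satisfied trivially, exactly as in the final line of the proof of Theorem~\ref{CImainresult}.

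Next I would carry out the explicit solution of the LP to produce the table. The feasible region is a bounded polytope in $\Q^6_{\ge0}$ cut out by $\mu_i+\nu_i\le 1$ and the three degree constraints; the maximum of the linear functional $\sum(\mu_i+\nu_i)$ is attained at a vertex. Using the normalization $\deg_H f_1<\deg_H f_2<\deg_H f_3$, equivalently $(1-\beta)\gamma>(1-\gamma)\alpha>(1-\alpha)\beta$ with $\alpha,\beta,\gamma$ as defined before the theorem, I would split into the five cases listed in the table according to the sign pattern of the auxiliary quantities $b_1-c_2$, $\alpha-\gamma$, and $c_1/a_2+c_2/b_1-1$, and in each case identify which constraints are active at the optimum, solve the resulting $6\times 6$ linear system, and check feasibility of the remaining inequalities. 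This is routine linear algebra; the only care needed is to confirm that the case division is exhaustive and that in each case the claimed vertex is indeed feasible and optimal (dual feasibility / a complementary-slackness check suffices).

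The main obstacle is not conceptual but bookkeeping: the case analysis for the table is delicate because the optimal vertex changes combinatorially as the parameters cross the thresholds $b_1=c_2$, $\alpha=\gamma$, and $c_1/a_2+c_2/b_1=1$, and one must make sure the boundaries between cases are handled consistently (the table entries should agree on overlaps). Establishing $\mathrm{rank}\,A=6$ cleanly is the one genuinely substantive step; I expect it to follow from the explicit minor computation together with the inequalities $a_i,b_i,c_i\ge 1$, but if a slicker argument is wanted one can instead invoke that $\a$, being the toric ideal of a numerical semigroup, has $\a S_x$ equal to the ideal of a codimension-$2$ subtorus, and combine this with the fact that the three generators are a minimal generating set to pin down the rank. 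Everything else is a direct appeal to Proposition~\ref{criterion}.
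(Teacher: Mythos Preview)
Your proof has a genuine gap at its central step: the claim that $\mathrm{rank}\,A=6$ is false. The vector $(1,1,1,-1,-1,-1)^{\mathrm T}$ lies in $\Ker A$, as one checks row by row (each of the first three rows has the form $(a_1+a_2)-a_2-a_1=0$ etc., and the bottom three rows give $1-1=0$). So $\mathrm{rank}\,A=5$, exactly as your own height computation suggested before you talked yourself out of it; your ``nonzero $6\times 6$ minor'' does not exist. The map $(\mu,\nu)\mapsto A(\mu,\nu)^{\mathrm T}$ is therefore \emph{not} injective, and the hypothesis of Proposition~\ref{criterion} is not automatic.

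What actually has to be shown is this: since $\Ker A=\Q\cdot(1,1,1,-1,-1,-1)^{\mathrm T}$, any two feasible points with the same $A$-image differ by $t(1,1,1,-1,-1,-1)$ for some $t$, and the objective $\sum(\mu_i+\nu_i)$ is constant along this line. Hence an optimal solution $(\mu,\nu)$ satisfies the uniqueness-of-image hypothesis of Proposition~\ref{criterion} if and only if one cannot move along this line in either direction and stay in $\Q_{\ge 0}^6$, i.e.\ if and only if $\mu_i=0$ for some $i$ \emph{and} $\nu_j=0$ for some $j$. The paper's proof is therefore not a routine vertex enumeration done after the fact to fill in a table: in each of the five cases it must exhibit an explicit optimal solution with at least one $\mu_i$ and at least one $\nu_j$ equal to zero, and then certify optimality (the paper does the latter via the degree inequality $\sum_i(\mu_i+\nu_i)\deg_H f_i\le n_1+n_2+n_3$ together with the ordering $\deg_H f_1<\deg_H f_2<\deg_H f_3$, or by reducing to a two-variable LP $(Q')$ in the $b_1>c_2$ cases). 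Your outline treats the case analysis as cosmetic bookkeeping; in fact it is where the entire content of the theorem lives.
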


\begin{proof}
We denote by $(P)$ the corresponding linear programming problem. 
Since $R$ is not a complete intersection, by an argument similar to Claim in the proof of Theorem \ref{CImainresult}, we can see that $\mathrm{rank\;}A=5$ and $\Ker A=\Q \cdot (1, 1,1,-1, -1, -1)^{\mathrm{T}}$. 
Then an optimal solution $(\mu_1, \mu_2, \mu_3, \nu_1, \nu_2, \nu_3)$ of $(P)$ satisfies the assumption of Proposition \ref{criterion} if and only if $\mu_i=\nu_j=0$ for some $1 \le i, j \le 3$. 
So, we look for a optimal solution $(\mu_1, \mu_2, \mu_3, \nu_1, \nu_2, \nu_3)$ of $(P)$ such that 
$\mu_i=\nu_j=0$ for some $1 \le i, j \le 3$. 
To do it,  the following fact is useful: if $(\mu_1, \mu_2, \mu_3, \nu_1, \nu_2, \nu_3)$ is a feasible solution of $(P)$, then 
$$(\mu_1+\nu_1) \deg_Hf_1+(\mu_2+\nu_2) \deg_Hf_2+(\mu_3+\nu_3) \deg_Hf_3 \le n_1+n_2+n_3.$$ 

\noindent\underline{In the case when $b_1=c_2=1$:} 

Let $(\mu_1, \mu_2, \mu_3, \nu_1, \nu_2, \nu_3)=\Bigl(\frac{c_1}{n_2}, \frac{c_1}{(1+b_2)n_2}, 0, 1-\frac{c_1}{n_2},  \frac{1}{n_2}, 0\Bigr)$.   
Then it is easy to see that $(\mu_1, \mu_2, \mu_3, \nu_1, \nu_2, \nu_3)$ is a feasible solution of $(P)$ and that 
$$(\mu_1+\nu_1) \deg_Hf_1+(\mu_2+\nu_2) \deg_Hf_2= n_1+n_2+n_3,$$ because 
\begin{align*}
n_1 &= (1+b_2)(c_1+1)-b_2c_1=~b_2+c_1+1,\\
n_2 &= (c_1+1)(a_1+a_2)-a_1~=(a_1+a_2)c_1+a_2,\\
n_3 &= (a_1+a_2)(1+b_2)-a_2~=(a_1+a_2)b_2+a_1. 
\end{align*}
Since $\mu_1+\nu_1=1$, we cannot add anything more to $\mu_1$ or $\nu_1$. 
Thus, since $\deg_H f_1<\deg_H f_2<\deg_H f_3$, the solution $(\mu_1, \mu_2, \mu_3, \nu_1, \nu_2, \nu_3)$ must be optimal.
By Proposition \ref{criterion}, the log canonical threshold $\lct_0(\a)$ is equal to the optimal value $1+\frac{n_1}{n_2(1+b_2)}$. 
\vspace{1em}

\noindent\underline{In the case when $b_1 \leq c_2$ and $c_2 \ge 2$:}

Let $(\mu_1, \mu_2, \mu_3, \nu_1, \nu_2, \nu_3)=\Bigl(\frac{1}{a_1+a_2}, \frac{1}{b_1+b_2}\Bigl(1-\frac{b_1}{c_2}\Bigr), 0, \frac{1}{c_2}, 0, 0\Bigr)$.
Then it is easy to check that $(\mu_1, \mu_2, \mu_3, \nu_1, \nu_2, \nu_3)$ is a feasible solution of $(P)$
and that
$$(\mu_1+\nu_1)\deg_H f_1+\mu_2\deg_H f_2=n_1+n_2+n_3, $$
because $b_1 \leq c_2$ and $c_2 \geq 2$. 
By the definition of $(P)$, we cannot add anything more to $\mu_1$ or $\nu_1$. 
Thus, since $\deg_H f_1<\deg_H f_2<\deg_H f_3$, the solution $(\mu_1, \mu_2, \mu_3, \nu_1, \nu_2, \nu_3)$ must be optimal.
By Proposition \ref{criterion}, the log canonical threshold $\lct_0(\a)$ is equal to the optimal value $\frac{1}{a_1+a_2}+\frac{1}{b_1+b_2}\Bigl(1-\frac{b_1}{c_2}\Bigr)$. 
\vspace{1em}

\noindent\underline{In the case when $b_1>c_2$:}

We consider the following linear programming problem $(Q)$:
$$\max\left\{\sum_{i=1}^6 \lambda_i \Bigg|  
B \ (\lambda_1,\dots,\lambda_6)^{\mathrm{T}} \leq \mathbf{1}, \ \lambda_i \in \Q_{\ge 0}\right\},$$
where 
$$B:=
\left(
\begin{array}{cccccc}
a_1+a_2 & 0 & 0 & 0 & a_2 & a_1\\
0 & b_1+b_2 & 0 & b_1 & 0 & b_2\\
0 & 0 & c_1+c_2 & c_2 & c_1 & 0\\
\end{array}
\right).
$$
If $(\lambda_1,\dots,\lambda_6)$ is an optimal solution of $(Q)$, then it is obvious that 
\[
	\lambda_1 = \frac{1-a_2\lambda_5 - a_1\lambda_6}{a_1+a_2},~~
	\lambda_2 = \frac{1-b_1\lambda_4 - b_2\lambda_6}{b_1+b_2},~~
	\lambda_3 = \frac{1-c_2\lambda_4 - c_1\lambda_5}{c_1+c_2}.
\]
In this case,  
\begin{align*}
\sum_{i=1}^6 \lambda_i &=\frac{1-a_2\lambda_5 - a_1\lambda_6}{a_1+a_2} +\frac{1-b_1\lambda_4 - b_2\lambda_6}{b_1+b_2}+ \frac{1-c_2\lambda_4 - c_1\lambda_5}{c_1+c_2} + \lambda_4+\lambda_5+\lambda_6 \\
&=(\gamma-\beta)\lambda_4 +(\alpha-\gamma)\lambda_5 +(\beta-\alpha)\lambda_6+\frac{1}{a_1+a_2}+\frac{1}{b_1+b_2}+\frac{1}{c_1+c_2}. 
\end{align*}
Since $(1-\beta)\gamma>(1-\gamma)\alpha>(1-\alpha)\beta$, it is easy to see that $\alpha >\beta$.  
So, the linear function $(\gamma-\beta)\lambda_4 +(\alpha-\gamma)\lambda_5 +(\beta-\alpha)\lambda_6$ achieves the maximal value when $\lambda_6=0$. 
This means that $(Q)$ is equivalent to the following linear programming problem $(Q')$ up to a constant:
$$
\max 
\left\{ (\gamma-\beta)\lambda_4 +(\alpha-\gamma)\lambda_5 
\left| 
\begin{array}{l} 
\\\\\\\\
\end{array}\right. 
\!\!\!\! \!\!\!\!
\begin{array}{r}
 a_2\lambda_5 \le 1, \ b_1\lambda_4 \le 1 \\ 
 c_2\lambda_4 +c_1\lambda_5 \le 1 \\
 \lambda_4, \lambda_5 \in\Q_{\ge 0}
	\end{array}
\right\}. 
$$
Since $(1-\beta)\gamma>(1-\gamma)\alpha$, one has $\gamma-\beta>(1-\gamma)(\alpha-\beta)$ and, in particular, $\gamma>\beta$. 

\begin{enumerate}
\item
In the case when $\alpha\le \gamma$:

\noindent$(\frac{1}{b_1}, 0)$ is an optimal solution of $(Q')$, and thus 
$\Bigl(\frac{1}{a_1+a_2}, 0, \frac{b_1-c_2 }{b_1(c_1+c_2)}, \frac{1}{b_1}, 0, 0\Bigr)$ is an optimal solution of $(Q)$. 
Since $\frac{1}{a_1+a_2}+\frac{1}{b_1} \leq 1$, it is also an optimal solution of $(P)$. 
By Proposition \ref{criterion}, the log canonical threshold $\lct_0(\a)$ is equal to the optimal value $\frac{1}{a_1+a_2}+\frac{b_1+c_1}{b_1(c_1+c_2)}$. \\

\item
In the case where $\alpha>\gamma$ and $\frac{c_1}{a_2}+\frac{c_2}{b_1}\le 1$:

\noindent$(\frac{1}{b_1},\frac{1}{a_2})$ is an optimal solution of $(Q')$, and thus
$\Bigl(0, 0, \frac{a_2b_1-a_2c_2-b_1c_1}{a_2b_1(c_1+c_2)}, \frac{1}{b_1}, \frac{1}{a_2}, 0\Bigl)$ is an optimal solution of $(Q)$. 
It is clearly an optimal solution of $(P)$, and then by Proposition \ref{criterion}, the log canonical threshold $\lct_0(\a)$ is equal to the optimal value $\frac{b_1+c_1}{b_1(c_1+c_2)}+\frac{c_2}{a_2(c_1+c_2)}$. \\

\item
In the case where $\alpha>\gamma$ and $\frac{c_1}{a_2}+\frac{c_2}{b_1} > 1$:
\begin{center}
\includegraphics*[width=10.0cm]{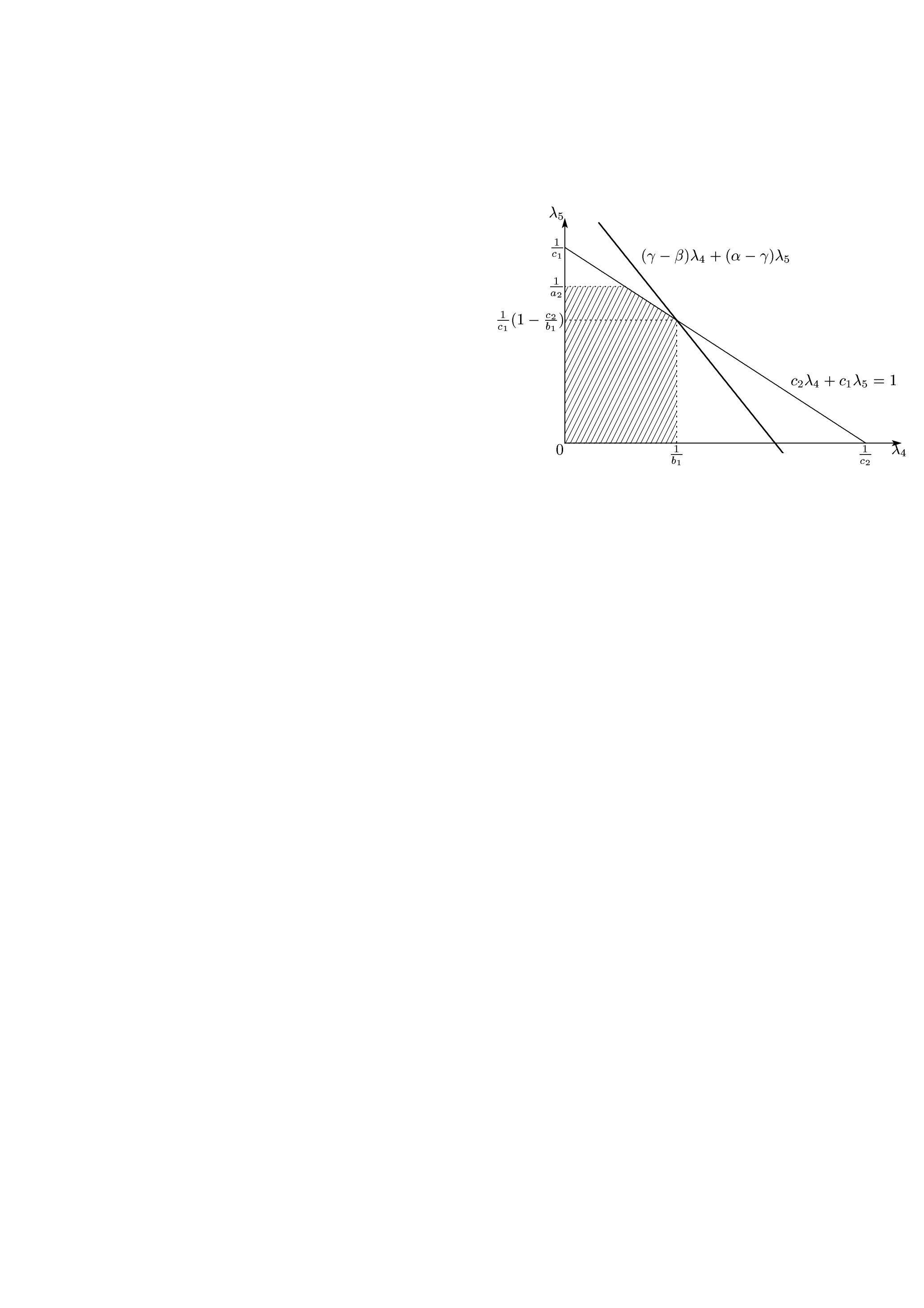}
\end{center}
\noindent First note that $c_2(\gamma-\beta)>c_1(\alpha-\gamma)$, because $(1-\beta)\gamma>(1-\gamma)\alpha$. Then $(\frac{1}{b_1},\frac{1}{c_1}\bigl(1-\frac{c_2}{b_1}\bigr))$ is an optimal solution of $(Q')$, and thus 
$$\left(\frac{b_1c_1-a_2b_1+a_2c_2}{(a_1+a_2)b_1c_1}, 0, 0, \frac{1}{b_1}, \frac{1}{c_1}\left(1-\frac{c_2}{b_1}\right), 0\right)$$ is an optimal solution of $(Q)$. 
Since $\frac{b_1c_1-a_2b_1+a_2c_2}{(a_1+a_2)b_1c_1}+\frac{1}{b_1} \le 1$,  it is also an optimal solution of $(P)$. 
By Proposition \ref{criterion}, the log canonical threshold $\lct_0(\a)$ is equal to the optimal value $\frac{1}{a_1+a_2}+\frac{1}{b_1}+\frac{a_1}{(a_1+a_2)c_1}(1-\frac{c_2}{b_1})$. 
\end{enumerate}
\end{proof}

By an argument similar to the proof of Theorem \ref{nonCImonomialcurve}, we can compute the log canonical threshold $\lct_0(\a)$ of the defining ideal $\a$ of a Gorenstein monomial curve $k[t^{n_1}, t^{n_2}, t^{n_3}, t^{n_4}]$ in $\mathbb{A}_k^4$. Here we give only one example. 

\begin{eg}
Let $\a=(x^3-zw, y^3-xz^2, z^3-y^2w, w^2-x^2y, xw-yz) \subseteq k[x,y,z,w]$ be the defining ideal of the monomial curve $k[t^8, t^{10}, t^{11}, t^{13}]$ in the affine space $\mathbb{A}_k^4$, where $k$ is a field of characteristic zero. We consider the following linear programming problem:
$$\max\left\{\sum_{i=1}^5 (\mu_i+\nu_i) \Bigg| A \ (\mu_1, \dots, \mu_5, \nu_1, \dots, \nu_5)^{\mathrm{T}} \le \mathbf{1}, \ \mu_i, \nu_i \in \Q_{\ge 0}\right\},$$
where  
$$
A:=\left(
\begin{array}{cccccccccc}
3 & 0 & 0 & 0 & 1 & 0 & 1 & 0 & 2 & 0\\
0 & 3 & 0 & 0 & 0 & 0 & 0 & 2 & 1 & 1\\
0 & 0 & 3 & 0 & 0 & 1 & 2 & 0 & 0 & 1\\
0 & 0 & 0 & 2 & 1 & 1 & 0 & 1 & 0 & 0\\
1 & 0 & 0 & 0 & 0 & 1 & 0 & 0 & 0 & 0\\
0 & 1 & 0 & 0 & 0 & 0 & 1 & 0 & 0 & 0\\
0 & 0 & 1 & 0 & 0 & 0 & 0 & 1 & 0 & 0\\
0 & 0 & 0 & 1 & 0 & 0 & 0 & 0 & 1 & 0\\
0 & 0 & 0 & 0 & 1 & 0 & 0 & 0 & 0 & 1
\end{array}
\right).
$$
Let $(\mathbf{\mu}, \mathbf{\nu}):=(\mu_1, \dots, \mu_5, \nu_1, \dots, \nu_5)$ be an optimal solution of the above linear programming problem. 
Since 
$$\Ker A=k \cdot (1,1,1,1,0, -1, -1, -1, -1, 0)^{\mathrm{T}}+k \cdot (0,1,1,0,1,0, -1, -1, 0, -1)^{\mathrm{T}},$$
if $\mu_3=\mu_4=\nu_3=\nu_4=0$, then there exists no other optimal solution $(\mathbf{\mu'}, \mathbf{\nu'}) \neq (\mathbf{\mu}, \mathbf{\nu})$ such that $A \ (\mathbf{\mu}, \mathbf{\nu})^{\mathrm{T}}=A \ (\mathbf{\mu'}, \mathbf{\nu'})^{\mathrm{T}}$. 
In this case, by Proposition \ref{criterion}, the log canonical threshold $\lct_0(\a)$ is equal to its optimal value $\sum_{i=1}^5 (\mu_i+\nu_i)$. 
Thus, we look for a optimal solution $(\mu_1, \dots, \mu_5, \nu_1, \dots, \nu_5)$ such that $\mu_3=\mu_4=\nu_3=\nu_4=0$. It is easy to check that $(\frac{1}{6}, \frac{1}{6}, 0, 0, \frac{1}{2}, \frac{1}{2}, 0, 0, 0, \frac{1}{2})$ is a feasible solution. 
Looking at the degrees, one can see that it is an optimal solution and its optimal value is $\frac{11}{6}$. 
Therefore, $\lct_0(\a)=\frac{11}{6}$. 
We remark that $(\frac{1}{3}, 0, 0, \frac{1}{2}, 0, 0, 0, 0, 0, 1)$ is another optimal solution but it does not satisfy the assumption of Proposition \ref{criterion}. 
\end{eg}

\begin{small}
\begin{acknowledgement}
The authors are indebted to Holger Brenner for pointing out a mistake in a previous version of the article and to Karl Schwede for careful reading of the article and helpful comments. 
They are grateful to Jason Howald, Kyungyong Lee, Irena Swanson, Zach Teitlar, Howard Thompson, Kei-ichi Watanabe and Cornelia Yuen for valuable conversations. 
They would also like to acknowledge useful comments from an anonymous referee. 
Our project began at the AIM workshop ``Integral closure, Multiplier ideals and Cores" in December, 2006. We would like to thank AIM for providing a stimulating environment. 
The second author was partially supported by Grant-in-Aid for Young Scientists (B) 20740019 from JSPS and by Program for Improvement of Research Environment for Young Researchers from SCF commissioned  by MEXT of Japan. 
\end{acknowledgement}
\end{small}

\end{document}